\newtheoremstyle{mytheorem}%
{5pt}%
{3pt}%
{\itshape}%
{1pt}%
{\bf}%
{.}%
{.5em}%
{}%
\newtheoremstyle{myremark}%
{5pt}%
{3pt}%
{\upshape}%
{1pt}%
{\em}%
{.}%
{.5em}%
{}%
\newtheoremstyle{myexample}%
{5pt}%
{3pt}%
{\upshape}%
{1pt}%
{\bf}%
{.}%
{.5em}%
{}%
\theoremstyle{mytheorem}
\newtheorem{theorem}{Theorem}[section]
\newtheorem{lemma}[theorem]{Lemma}
\newtheorem{proposition}[theorem]{Proposition}
\newtheorem{corollary}[theorem]{Corollary}
\theoremstyle{myremark}
\theoremstyle{myexample}
\newtheorem{example}[theorem]{Example}
\numberwithin{equation}{section}
\makeatletter \renewenvironment{proof}[1][\proofname] {\par\pushQED{\qed}\normalfont\topsep6\p@\@plus6\p@\relax\trivlist\item[\hskip\labelsep\itshape #1\@addpunct{.}]\ignorespaces}{\popQED\endtrivlist\@endpefalse} \makeatother
\renewcommand{\phi}{\varphi}
\renewcommand{\theta}{\vartheta}
\renewcommand{\epsilon}{\varepsilon}
\DeclareMathOperator{\supp}{supp}
\newcommand{\lmu}{\mathscr{L}^2(\mu)}
\newcommand{\lnu}{\mathscr{L}^2(\nu)}
\newcommand{\Abs}[1]{\big\lvert#1\big\rvert}
\newcommand{\alg}{\mathscr{A}}
\newcommand{\ring}{\mathscr{R}}
\DeclareMathOperator{\sform}{\mathfrak{s}}
\DeclareMathOperator{\tform}{\mathfrak{t}}
\DeclareMathOperator{\mform}{\mathfrak{m}}
\DeclareMathOperator{\nform}{\mathfrak{n}}
\DeclareMathOperator{\wform}{\mathfrak{w}}
\DeclareMathOperator{\vform}{\mathfrak{v}}
\DeclareMathOperator{\aform}{\mathfrak{a}}
\DeclareMathOperator{\bform}{\mathfrak{b}}
\newcommand{\abs}[1]{\lvert#1\rvert}
\newcommand{\dupN}{\mathbb{N}}
\newcommand{\seq}[1]{(#1_{n})_{n\in\dupN}}
\newcommand{\dupC}{\mathbb{C}}
\newcommand{\dom}{\operatorname{dom}}
\newcommand{\D}{\mathscr{D}}
\newcommand{\hil}{\mathfrak{H}}
\newcommand{\hils}{\hil_{\mathfrak{s}}}
\newcommand{\hilt}{\hil_{\mathfrak{t}}}
\newcommand{\hilm}{\mathfrak{H}_{_\mu}}
\newcommand{\hila}{\mathfrak{H}_{\mathfrak{a}}}
\newcommand{\hilb}{\mathfrak{H}_{\mathfrak{b}}}
\newcommand{\hilw}{\mathfrak{H}_{w}}
\newcommand{\hilv}{\mathfrak{H}_{v}}
\DeclarePairedDelimiterX\sip[2]{(}{)}{#1\,\delimsize\vert\,#2}
\DeclarePairedDelimiterX\siptilde[2]{(}{)_{\!_{\widetilde{A}}}}{#1\,\delimsize\vert\,#2}
\DeclarePairedDelimiterX\sipn[2]{(}{)_{\nu}}{#1\,\delimsize\vert\,#2}
\DeclarePairedDelimiterX\sipm[2]{(}{)_{\mu}}{#1\,\delimsize\vert\,#2}
\DeclarePairedDelimiterX\sips[2]{(}{)_{\sform}}{#1\,\delimsize\vert\,#2}
\DeclarePairedDelimiterX\sipt[2]{(}{)_{\tform}}{#1\,\delimsize\vert\,#2}
\DeclarePairedDelimiterX\set[2]{\{}{\}}{#1\,\delimsize\vert\,#2}
\DeclarePairedDelimiterX\dual[2]{\langle}{\rangle}{#1,#2}
\DeclarePairedDelimiterX\sipa[2]{(}{)_{\aform}}{#1\,\delimsize\vert\,#2}
\DeclarePairedDelimiterX\sipb[2]{(}{)_{\bform}}{#1\,\delimsize\vert\,#2}
\DeclarePairedDelimiterX\sipv[2]{(}{)_{v}}{#1\,\delimsize\vert\,#2}
\DeclarePairedDelimiterX\sipw[2]{(}{)_{w}}{#1\,\delimsize\vert\,#2}
\newcommand{\cha}[1]{\chi_{_{#1}}}
\newcommand{\limn}{\lim\limits_{n\rightarrow\infty}}
\begin{document}
\title[Radon--Nikodym theorems]{Radon--Nikodym theorems for nonnegative forms, measures and representable functionals}

\author[Zs. Tarcsay]{Zsigmond Tarcsay}
\address{Zs. Tarcsay, Department of Applied Analysis, E\"otv\"os L. University, P\'azm\'any P\'eter s\'et\'any 1/c., Budapest H-1117, Hungary; }
\email{tarcsay@cs.elte.hu}

\keywords{Radon--Nikodym theorem, absolute continuity, set functions, Hermitian forms, representable functionals}
\subjclass[2010]{Primary 47A07, 46B22, 28A12, 46K10}

\begin{abstract}
The aim of this note is to establish two Radon--Nikodym type theorems for nonnegative Hermitian forms defined on a real or complex vector space. We also apply these results to prove the known Radon--Nikodym theorems of the theory of representable positive functionals, $\sigma$-additive and finitely additive measures.
\end{abstract}

\maketitle

\section{Introduction}

 The notion of absolute continuity appears in several fields of the analysis. For example, in measure theory, a measure $\nu$ is said to be absolutely continuous with respect to another measure $\mu$ if $\mu(E)=0$ implies $\nu(E)=0$ for each measurable set $E$. Denoting by $\D$ the vector space of measurable step functions, one easily verifies that absolute continuity means that the canonical embedding map of $\D\subseteq\lmu$ into $\lnu$ is a well defined linear operator. It is less evident that $J$ is automatically closable in that case:
  \begin{equation*}
        \int \abs{\phi_n}^2~d\mu\to0\quad \textrm{and}\quad \int \abs{\phi_n-\phi_m}^2~d\nu\to0\quad\textrm{imply} \quad\int \abs{\phi_n}^2~d\nu\to0.
    \end{equation*}
For the proof see \cite[Lemma 5.1]{Hassi2009}. If we associate two nonnegative Hermitian forms on $\D$ with the measures under consideration, namely by setting
\begin{equation*}
    \mform(\phi,\psi)=\int \phi\overline{\psi}~d\mu, \qquad \nform(\phi,\psi)=\int \phi\overline{\psi}~d\nu,
\end{equation*}
then we see  that $\nform$ is $\mform$-absolutely continuous (see \cite{Hassi2009} or \cite{Simon}):
\begin{equation*}
    \mform(\phi_n,\phi_n)\to0\quad \textrm{and}\quad\nform(\phi_n-\phi_m,\phi_n-\phi_m)\to0\quad\textrm{imply} \quad\mform(\phi_n,\phi_n)\to0.
\end{equation*}

By considering (nonnegative valued) finitely additive set functions, the situation becomes somewhat more complicated: $\beta$ is called $\alpha$-absolutely continuous if for any $\epsilon>0$ there is $\delta>0$ such that $\beta(E)<\epsilon$ whenever $\alpha(E)<\epsilon$.
Just like in the $\sigma$-additive case, we can associate two forms, $\aform$ and $\bform$, with $\alpha$ and $\beta$, respectively:
  \begin{equation*}
    \aform(\phi,\psi)=\int \phi\overline{\psi}~d\alpha, \qquad \bform(\phi,\psi)=\int \phi\overline{\psi}~d\beta.
\end{equation*}
The symbol $\D$ stands again for the space of measurable step functions. It turns out that the $\alpha$-absolute continuity of $\beta$, under certain natural conditions, is equivalent to the $\aform$-absolute continuity of $\bform$, see \cite[Theorem 3.2]{sebestytarcsaytitkos} or \cite[Theorem 3.4]{titkos}. The proof of this statement in the cited papers uses a general Lebesgue decomposition theorem \cite[Theorem 2.11]{Hassi2009} concerning forms (cf. also \cite[Theorem 2.3]{sebestytarcsaytitkos}). For the sake of the reader we include an independent and self-contained proof here (see Lemma \ref{L:additivelemma} below).

The notion of absolute continuity occurs also in the theory of positive linear functionals on a $^*$-algebra: $w$ is called $v$-absolutely continuous if
\begin{equation*}
    v(a_n^*a_n)\to0\quad\textrm{and}\quad w((a_n^*-a^*_m)(a_n-a_m))\to0 \quad\textrm{imply} \quad w(a_n^*a_n)\to0.
\end{equation*}
We can, of course, associate two forms $\vform$ and $\wform$ with $v$ and $w$, respectively, in a quite natural way:
\begin{equation*}
    \vform(a,b)=v(b^*a),\qquad \wform(a,b)=w(b^*a).
\end{equation*}
The equivalence of the corresponding absolute continuity concepts is obvious in this case.

We conclude therefore that there is a very tight connection between the notions of absolute continuity in the above settings. Nevertheless, we can find a further deep relation: there appear also appropriate results regarding the representability of the absolutely continuous object via the dominating one. Such results used to be called  Radon--Nikodym type theorems. In the present paper we offer two Radon--Nikodym type theorems of different type for nonnegative Hermitian forms: the first one is motivated by the Radon--Nikodym--Darst theorem \cite{DarstGreen} for finitely additive set functions (cf. also  Fefferman \cite{Fefferman}), and the second one was suggested by a theorem due to Gudder \cite[Theorem 1]{Gudder} concerning positive linear functionals on a $^*$-algebra.

The remaining parts of this note contain several applications of these  results: In Section 3 we discuss the case of (finitely) additive set functions and prove that the Radon--Nikodym--Darst theorem \cite{DarstGreen} is a straightforward consequence of our main result Theorem \ref{T:maintheorem}. As a novelty we consider set functions defined on a \emph{ring of sets} in contrast to \cite{DarstGreen} or \cite{Fefferman} where set functions over \emph{algebras} were examined. Section 4 is devoted to the measure theory : we give an operator theoretic proof of the classical Radon--Nikodym theorem. Furthermore, we characterize the case when the Radon--Nikodym derivative belongs to $\mathscr{L}^2$. Finally, in Section 5 we provide two Radon--Nikodym type theorems for positive functionals on a $^*$-algebra. In particular, we extend a corresponding result due to Gudder \cite{Gudder} to the case of representable functionals  defined on a not necessarily unital $^*$-algebra and  we characterize the absolute continuity among pure functionals on a $C^*$-algebra.

\section{Radon--Nikodym theorems for nonnegative Hermitian forms}
 Let $\D$ be a vector space over the real or complex field. Given a nonnegative Hermitian form $\sform$ on $\D$, we always associate a Hilbert space $\hils$ with $\sform$ by the standard method. That is to say, by setting $\mathscr{N}_{\sform}=\set{x\in\D}{s(x,x)=0}$,  we define the Hilbert space $\hils$ to be the completion of the pre-Hilbert space $\D/\mathscr{N}_{\sform}$ endowed with the inner product
 \begin{equation*}
    \sips{x+\mathscr{N}_{\sform}}{y+\mathscr{N}_{\sform}}=\sform(x,y),\qquad x,y\in\D.
 \end{equation*}
 For simplicity of notation, we continue to write $x$ for $x+\mathscr{N}_{\sform}$, in the hope that we do not cause any confusion.

 If another nonnegative Hermitian form $\tform$ on $\D$ is given, then we say that $\sform$ is absolutely continuous (in other words, closable, cf. \cite{Hassi2009,Simon}) with respect to $\tform$ if
 \begin{equation}\label{E:closability}
    \tform(x_n,x_n)\to0\qquad\textrm{and}\qquad \sform(x_n-x_m,x_n-x_m)\to0
 \end{equation}
 imply $\sform(x_n,x_n)\to0$ for all sequence $\seq{x}$ of $\D$. Following the terminology of Gudder \cite{Gudder}, a sequence $\seq{x}$ satisfying \eqref{E:closability} will be called a $(\tform,\sform)$-sequence.

 Our main result in this section is the following Radon--Nikodym type theorem:
 \begin{theorem}\label{T:maintheorem}
    Let $\sform$ and $\tform$ be nonnegative Hermitian forms on the real or complex vector space $\D$ such that $\sform$ is absolutely continuous with respect to $\tform$. Then for each $f\in\hils$ there exists a sequence $\seq{x}$ of $\D$ such that
    \begin{equation}\label{E:pseudo}
        \sips{x}{f}=\limn \sipt{x}{x_n}\qquad \textrm{for all $x\in\D$}.
    \end{equation}
    Moreover, the convergence is uniform on the set $\set{x\in\D}{\sform(x,x)+\tform(x,x)\leq1}$.
 \end{theorem}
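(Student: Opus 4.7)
I will recognise the absolute continuity hypothesis as the closability of the natural operator
\[
T\colon \hilt\supseteq D(T):=\D/\mathscr{N}_{\tform}\to\hils,\qquad T(x+\mathscr{N}_{\tform}):=x+\mathscr{N}_{\sform},
\]
and then exploit the fact that the adjoint $T^*$ is densely defined.

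First, $T$ is well defined because absolute continuity forces $\mathscr{N}_{\tform}\subseteq\mathscr{N}_{\sform}$: feeding the constant sequence $x_n=x$ with $\tform(x,x)=0$ into \eqref{E:closability} gives $\sform(x,x)=0$. The domain $D(T)=\D/\mathscr{N}_{\tform}$ is dense in $\hilt$ by construction, and the closability of $T$ is exactly the content of \eqref{E:closability}: if $x_n\to 0$ in $\hilt$ and $Tx_n$ converges in $\hils$ to some $\xi$, then $\seq{x}$ is a $(\tform,\sform)$-sequence, so $\sform(x_n,x_n)\to 0$ and hence $\xi=0$. Consequently $T^*\colon\hils\supseteq D(T^*)\to\hilt$ is densely defined.

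Given $f\in\hils$, I will choose $f_n\in D(T^*)$ with $f_n\to f$ in $\hils$, and, using the density of $\D/\mathscr{N}_{\tform}$ in $\hilt$, pick $x_n\in\D$ with $\|x_n-T^*f_n\|_{\hilt}<1/n$. For any $x\in\D$, the defining relation $\sipt{x}{T^*f_n}=\sips{Tx}{f_n}=\sips{x}{f_n}$ of the adjoint, together with the Cauchy--Schwarz inequality, yields
\begin{align*}
\bigl|\sipt{x}{x_n}-\sips{x}{f}\bigr|
&\leq\bigl|\sips{x}{f_n-f}\bigr|+\bigl|\sipt{x}{x_n-T^*f_n}\bigr|\\
&\leq\sform(x,x)^{1/2}\,\|f_n-f\|_{\hils}+\tform(x,x)^{1/2}/n,
\end{align*}
which tends to zero and does so uniformly over $\set{x\in\D}{\sform(x,x)+\tform(x,x)\leq 1}$.

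The principal obstacle is the second step above, namely recognising that \eqref{E:closability} is exactly the closability of $T$. Once that identification is made, the existence of $\seq{x}$ with the required properties reduces to the density of $D(T^*)$ in $\hils$ combined with the density of $\D/\mathscr{N}_{\tform}$ in $\hilt$, and no delicate estimates remain.
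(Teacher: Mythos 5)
Your proposal is correct and follows essentially the same route as the paper: you identify absolute continuity with closability of the embedding $\D\subseteq\hilt\to\hils$, use the density of the adjoint's domain to approximate $f$, and conclude with the same two-term Cauchy--Schwarz estimate giving uniformity on $\set{x\in\D}{\sform(x,x)+\tform(x,x)\leq1}$. The only difference is cosmetic: you spell out explicitly that $\mathscr{N}_{\tform}\subseteq\mathscr{N}_{\sform}$ and why closability holds, details the paper leaves implicit.
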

 \begin{proof}
    Absolute continuity of $\sform$ with respect to $\tform$ means precisely that the canonical embedding operator $J$ of $\D\subseteq\hilt$ into $\hils$, defined by
 \begin{equation}\label{E:J}
    Jx=x,\qquad x\in\D,
 \end{equation}
  is  closable. Hence the domain $\dom J^*$ of its adjoint is dense in $\hils$. Consider a sequence $\seq{g}$ of $\dom J^*$ such that $g_n\to f$ in $\hils$, and for any integer $n$ fix $x_n\in\D$ such that $\|J^*g_n-x_n\|_{\tform}<1/n$. Then for   $x\in\D$, $\sform(x,x)+\tform(x,x)\leq1$  we infer that
  \begin{align*}
    \abs{\sips{x}{f} - \sipt{x}{x_n}}&\leq \abs{\sips{x}{f} -\sips{Jx}{g_n}} + \abs{\sips{x}{J^*g_n}- \sipt{x}{x_n}}\\
    &\leq \|f-g_n\|_{\sform}+\|J^*g_n-x_n\|_{\tform}\to 0,
  \end{align*}
  as it is claimed.
 \end{proof}
 Hereinafter we shall call the form $\sform$-\emph{pseudo-absolutely continuous} with respect to $\tform$ if for any $f\in\hils$ there exists $\seq{x}$ of $\D$ satisfying \eqref{E:pseudo}. We can therefore reformulate Theorem \ref{T:maintheorem} as follows: absolute continuity implies pseudo-absolute continuity. It is a natural question  whether this statement can be reversed. As Example \ref{Ex:counter} below demonstrates, the answer is negative in general.

 Before giving a counterexample we briefly recall the notion of singularity: $\sform$ and $\tform$ are called singular with respect to each other if for each nonnegative Hermitian form $\wform$ the properties $\wform\leq\sform$ and $\wform\leq\sform$ imply $\wform=0$. By a general Lebesgue decomposition theorem due to Hassi, Sebesty\'en and de Snoo \cite{Hassi2009} (see also \cite{sebestytarcsaytitkos}) each nonnegative Hermitian form $\sform$ can be decomposed into a sum $\sform=\sform_a+\sform_s$ such that $\sform_a$ is absolutely continuous with respect to $\tform$ and that $\sform_s$ and $\tform$ are mutually singular. In particular, singularity and absolute continuity are "dual" concepts in the sense that if $\sform$ and $\tform$ are singular with respect to each other, and $\sform$ is  $\tform$-absolutely continuous then $\sform=0$.

 Below we present  an example demonstrating that a nonzero form can be simultaneously singular and pseudo-absolutely continuous with respect to another form:
 \begin{example}\label{Ex:counter}
    Let $\D$ stand for the algebra of all continuous $\dupC$-valued functions defined on $I=[-1,1]$. For fixed $\phi,\psi\in\D$ we define the forms $\sform$ and $\tform$ by
    \begin{equation*}
        \sform(\phi,\psi)=\phi(0)\overline{\psi(0)},\qquad \tform(\phi,\psi)=\int_I \phi\overline{\psi}~d\lambda.
    \end{equation*}
   Consider a sequence $\seq{\phi}$ of $\D$ satisfying $\phi_n\geq0, \supp \phi_n\subseteq[-1/n,1/n]$ and $\displaystyle \int_I \phi_n~d\lambda=1$. It is clear that
  \begin{align*}
    \sips{\phi}{1}=\phi(0)=\limn\int_I \phi_n~d\lambda=\limn \sipt{\phi}{\phi_n},
  \end{align*}
  for all $\phi\in\D$. Since $\hils$ is one-dimensional, one concludes that $\sform$ is pseudo-absolutely continuous with respect to $\tform$. Nevertheless, one easily verifies that $\sform$ and $\tform$ are singular.
 \end{example}
In the next theorem we present another Radon--Nikodym type theorem, suggested by \cite[Theorem 1]{Gudder} (cf. also \cite{sebestytitkosRN}),  which at the same time offers a characterization of the absolute continuity:
\begin{theorem}\label{T:RNforms}
    Let $\sform, \tform$ be nonnegative Hermitian forms on a complex vector space $\D$. The following statements are equivalent:
    \begin{enumerate}[\upshape (i)]
      \item $\sform$ is absolutely continuous with respect to $\tform$;
      \item There is a positive selfadjoint operator $S$ in $\hilt$ such that $\D\subseteq\dom S^{1/2}$ and that
      \begin{equation}\label{W1/2}
        \sipt{S^{1/2}x}{S^{1/2}y}=\sform(x,y),\qquad \textrm{for all $x,y\in\D$.}
      \end{equation}
    \end{enumerate}
\end{theorem}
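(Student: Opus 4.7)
The plan is to work with the canonical embedding $J\colon\D\subseteq\hilt\to\hils$, $Jx=x$, already introduced in the proof of Theorem \ref{T:maintheorem}. Recall that the $\tform$-absolute continuity of $\sform$ is equivalent to the closability of $J$. I will use this to construct $S$ via the standard product $J^{**}J^*$, and conversely I will exploit closedness of $S^{1/2}$ for the reverse implication.

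For the easy implication $(ii)\Rightarrow(i)$, assume $S$ is a positive selfadjoint operator on $\hilt$ with $\D\subseteq\dom S^{1/2}$ satisfying \eqref{W1/2}. Then $\sform(x,x)=\|S^{1/2}x\|_{\tform}^2$ for every $x\in\D$. If $\seq{x}$ is a $(\tform,\sform)$-sequence, then $x_n\to 0$ in $\hilt$ and, by \eqref{W1/2} applied to differences, $(S^{1/2}x_n)$ is Cauchy in $\hilt$. Because $S^{1/2}$ is selfadjoint and therefore closed, its only possible limit here is $S^{1/2}0=0$, whence $\sform(x_n,x_n)=\|S^{1/2}x_n\|_{\tform}^2\to 0$, giving absolute continuity.

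For the principal implication $(i)\Rightarrow(ii)$, the closability of $J$ is the central tool. To avoid the slight subtlety that $\D$ need not be dense in $\hilt$, I would first extend $J$ to the dense subspace $\D\oplus(\overline{\D}^{\hilt})^{\perp}$ by letting it vanish on the orthogonal complement; this extension is still closable, so its adjoint $J^*$ is densely defined and $J^{**}=\overline{J}$ is the closure of $J$. Set $S:=J^{**}J^*$; by the classical von Neumann result on the product of a closed densely defined operator with its adjoint, $S$ is a positive selfadjoint operator on $\hilt$ with $\dom S^{1/2}=\dom J^{**}$ and
\begin{equation*}
\|S^{1/2}z\|_{\tform}=\|J^{**}z\|_{\sform}\qquad(z\in\dom S^{1/2}).
\end{equation*}
Since $\D\subseteq\dom J\subseteq\dom J^{**}$ and $J^{**}x=x$ in $\hils$ for every $x\in\D$, this gives $\|S^{1/2}x\|_{\tform}^2=\sform(x,x)$, and polarization yields \eqref{W1/2}.

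The only step I anticipate requiring real care is the bookkeeping around the density of $\D$ in $\hilt$: without the zero-extension, $J^*$ is not literally defined, but one must verify that extending $J$ by zero preserves closability and that the resulting $S$ really lives on all of $\hilt$. Once this is in place, the whole argument reduces to a textbook application of the $T^*T$ construction plus polarization, and no further machinery (Lebesgue decomposition, parallel sum, etc.) is needed.
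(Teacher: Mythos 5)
Your argument is correct and is essentially the paper's own proof: the closedness of $S^{1/2}$ yields (ii)$\Rightarrow$(i), and von Neumann's theorem applied to the closable canonical embedding $J$ of $\D\subseteq\hilt$ into $\hils$ yields (i)$\Rightarrow$(ii), exactly as in the text. Two small corrections to your write-up: the operator acting in $\hilt$ is $S=J^*J^{**}$ rather than your $J^{**}J^*$ (the latter acts in $\hils$; the properties you invoke, namely $\dom S^{1/2}=\dom J^{**}$ and $\|S^{1/2}z\|_{\tform}=\|J^{**}z\|_{\sform}$, are precisely those of $J^*J^{**}$), and your zero-extension detour is unnecessary, since $\hilt$ is by construction the completion of $\D/\mathscr{N}_{\tform}$, so $\D$ is automatically dense in $\hilt$ and $J$ is densely defined as it stands.
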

\begin{proof}
    We are going to prove first that (ii) implies (i). Consider a $(\tform,\sform)$-sequence $\seq{x}$ of $\D$. In the language of Hilbert space operators that means that
    \begin{equation*}
        \sipt{x_n}{x_n}\to0\qquad \textrm{and}\qquad \sipt{S^{1/2}(x_n-x_m)}{S^{1/2}(x_n-x_m)}\to0,
    \end{equation*}
    by letting $n,m\to\infty$. Then, by the closability of $S^{1/2}$ we infer that
    \begin{equation*}
        \sform(x_n,x_n)=\sipt{S^{1/2}x_n}{S^{1/2}x_n}\to0.
    \end{equation*}
    Hence $\sform$ is absolutely continuous with respect to $\tform$. Conversely, by assuming $\sform$ to be $\tform$-absolutely continuous, we conclude that the canonical embedding operator $J$ of $\hilt$ into $\hils$ \eqref{E:J} is closable. Hence, by a celebrated theorem of Neumann, $S:=J^*J^{**}$ is a positive selfadjoint operator in $\hilt$ such that $\D=\dom J\subseteq\dom J^{**}=\dom S^{1/2}$ and that
    \begin{equation*}
        \sipt{S^{1/2}x}{S^{1/2}y}=\sips{J^{**}x}{J^{**}y}=\sips{Jx}{Jy}=\sform(x,y),
    \end{equation*}
    for all $x,y\in\D$.
\end{proof}
\section{The Radon--Nikodym--Darst theorem of additive set functions}

Throughout this section $T$ is a nonempty set and $\ring$ is a ring of subsets of $T$. Let $\beta$ be a (real or complex valued) additive set function on $\ring$. We assume in the sequel that $\beta$ is  bounded, that is to say,
\begin{equation*}
    M:=\sup_{E\in\ring} \abs{\beta(E)}<\infty.
\end{equation*}
Then  the total variation $\abs{\beta}$ of $\beta$ exists, and thus we can naturally associate a nonnegative Hermitian form $\mathfrak{b}$ with $\beta$ on the vector space $\D$ of the (real or complex valued, respectively) $\ring$-simple functions by letting
\begin{equation}\label{aform}
    \bform(\phi,\psi)=\int_T \phi\overline{\psi}~d\abs{\beta}, \qquad \phi, \psi\in\D.
\end{equation}
Furthermore, we associate a Hilbert space $(\hilb,\sipb{\cdot}{\cdot})$ with $\bform$ just as in the previous section. By the boundedness of $\beta$ one easily verifies that  the correspondence
\begin{equation*}
    \phi\mapsto \int_T \phi~ d\beta
\end{equation*}
defines a continuous linear functional on the dense linear manifold $\D$ of $\hilb$, namely, by the norm bound $\sqrt{M}$. Hence the Riesz representation theorem yields a unique representing vector $\widehat{\beta}\in\hilb$ such that
\begin{equation}\label{betahat}
    \int_T \phi~ d\beta=\sipb{\phi}{\widehat{\beta}},\qquad \phi\in\D.
\end{equation}

Let another (not necessarily bounded) nonnegative additive set function $\alpha$ on $\ring$ be given. We say that $\beta$ is absolutely continuous with respect to $\alpha$ if for any $\epsilon>0$ there exists $\delta>0$ such that $\alpha(E)<\delta$ implies $\abs{\beta}(E)<\epsilon$ for all $E\in\ring$, see \cite{DarstGreen, Fefferman}.
Let us denote by $\aform$ and $\hila$ the corresponding associated Hermitian form and Hilbert space, respectively. It makes sense then to speak about absolute continuity of the form $\bform$ with respect to the form $\aform$, and a natural question there arises: Is there any relation between the notions of absolute continuity of additive set functions and Hermitian forms? The answer is given in the following lemma:
\begin{lemma}\label{L:additivelemma} Let $\alpha$ and $\beta$ be nonnegative additive set functions on $\ring$ where $\beta$ is bounded. If $\beta$ is absolutely continuous with respect to $\alpha$ then $\bform$ is absolutely continuous with respect to $\aform$. If $\alpha$ is bounded too then the converse of the statement is also valid.
\end{lemma}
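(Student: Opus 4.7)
The plan is to prove the two implications separately, using the Riesz representative $\widehat{\beta}\in\hilb$ from \eqref{betahat} as the bridge between the set-function and the form-theoretic notions of absolute continuity.

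For the forward direction, take a $(\aform,\bform)$-sequence $(\phi_n)$. Being $\bform$-Cauchy, it has a limit $f\in\hilb$, and $\bform(\phi_n,\phi_n)\to\|f\|_{\bform}^{2}$, so it suffices to prove $f=0$. Since $\Span\{\chi_{E}:E\in\ring\}=\D$ is dense in $\hilb$, this reduces to showing that $\sipb{\phi_n}{\chi_E}=\int_{E}\phi_n\,d\beta\to 0$ for every $E\in\ring$. Split the integral at a cutoff $\lambda>0$: the contribution from $\{|\phi_n|\leq\lambda\}$ is bounded trivially by $\lambda\beta(E)\leq\lambda M$, while the contribution from $F_n:=E\cap\{|\phi_n|>\lambda\}\in\ring$ is estimated by Cauchy--Schwarz in $\hilb$ as at most $\|\phi_n\|_{\bform}\sqrt{\beta(F_n)}$. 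Chebyshev's inequality in $\aform$ gives $\alpha(F_n)\leq\lambda^{-2}\aform(\phi_n,\phi_n)\to 0$, and the $\epsilon$--$\delta$ hypothesis $\beta\ll\alpha$ transfers this to $\beta(F_n)\to 0$; together with the uniform bound on $\|\phi_n\|_{\bform}$ (Cauchy sequences are bounded), this forces the second contribution to vanish. Thus $\limsup_n|\sipb{\phi_n}{\chi_E}|\leq\lambda M$, and letting $\lambda\to 0$ concludes that $f=0$.

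For the converse, assume further that $\alpha$ is bounded and $\bform\ll\aform$, and argue by contradiction: if the $\epsilon$--$\delta$ condition fails, there exist $\epsilon_0>0$ and $E_n\in\ring$ with $\alpha(E_n)\to 0$ and $\beta(E_n)\geq\epsilon_0$. Set $\phi_n:=\chi_{E_n}\in\D$; this sequence is bounded in $\hilb$ by $\sqrt{M}$, so some subsequence satisfies $\phi_{n_k}\rightharpoonup g$ in $\hilb$. Mazur's theorem produces convex combinations of tails $\psi_j=\sum_{k\geq j}\lambda_{j,k}\phi_{n_k}$ converging to $g$ in $\hilb$-norm, and the estimate $\|\psi_j\|_{\aform}\leq\sup_{k\geq j}\|\phi_{n_k}\|_{\aform}\to 0$ makes $(\psi_j)$ a $(\aform,\bform)$-sequence; hence $g=0$ by the assumed absolute continuity. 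On the other hand, evaluating the weak convergence against $\widehat{\beta}$ gives $\beta(E_{n_k})=\sipb{\phi_{n_k}}{\widehat{\beta}}\to\sipb{g}{\widehat{\beta}}=0$, contradicting $\beta(E_{n_k})\geq\epsilon_0$.

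The hard part is the converse: the natural candidates $\chi_{E_n}$ that witness the failure of $\epsilon$--$\delta$ absolute continuity are not in general $\bform$-Cauchy, so they cannot be fed directly into the closability hypothesis. Extracting a weak cluster point, promoting it to a strongly convergent sequence via Mazur's theorem, and then translating the resulting weak vanishing back into the required quantitative information about $\beta(E_{n_k})$ via $\widehat{\beta}$ is the technical core of the proof.
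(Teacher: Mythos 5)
Your argument is correct, but it is genuinely different from the paper's. For the forward implication the paper does not verify the sequential closability condition directly: it invokes an auxiliary decomposition (a sequence of set functions $\beta_n\leq\beta$ with $\beta_n\leq c_n\alpha$ and $\beta_n(E)\to\beta(E)$, quoted from an earlier paper), represents the functionals $\phi\mapsto\int\phi\overline{\psi}\,d\beta_n$ by vectors $\psi.\widehat{\beta_n}\in\dom J^*$, and concludes closability of the embedding $J$ by showing that $\dom J^*$ is dense in $\hilb$. Your proof instead takes an arbitrary $(\aform,\bform)$-sequence and kills its $\hilb$-limit by hand, splitting $\int_E\phi_n\,d\beta$ at a level $\lambda$, using Chebyshev in $\aform$ on $\{\abs{\phi_n}>\lambda\}$ (which lies in $\ring$ since $\phi_n$ is $\ring$-simple and $\lambda>0$), and feeding the resulting smallness of $\alpha$ into the $\epsilon$--$\delta$ hypothesis; this is more elementary and entirely self-contained, at the price of not producing the $\dom J^*$-density picture that the paper's later sections reuse. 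For the converse the paper offers no proof at all (it refers to \cite[Theorem 3.2]{sebestytarcsaytitkos}), so your weak-compactness argument --- bounded sequence $\chi_{E_n}$ in $\hilb$, weak cluster point, Mazur convex combinations from the tails to manufacture an $(\aform,\bform)$-sequence, and the pairing with the Riesz vector $\widehat{\beta}$ to recover $\beta(E_{n_k})$ --- is a genuine addition; note moreover that it only uses boundedness of $\beta$ (needed for $\widehat{\beta}$ and the norm bound $\sqrt{M}$), never of $\alpha$, so you in fact prove the converse under weaker hypotheses than the lemma states.
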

\begin{proof} A proof of this statement is found in \cite[Theorem 3.2]{sebestytarcsaytitkos} in the particular case when both set functions are bounded. We treat here therefore only the first assertion, by giving a new and independent proof that does not make use of the nontrivial apparat of the Lebesgue decomposition theory of forms \cite{Hassi2009}.

Let $\beta$ be $\alpha$-absolutely continuous. Then there exists a sequence $\seq{\beta}$ of nonnegative additive set functions such that $\beta_n\leq\beta$, $\beta_n(E)\to\beta(E)$ for all $E\in\ring$ and that $\beta_n\leq c_n\alpha$ for some sequence $\seq{c}$ of nonnegative numbers (see e.g. \cite[Lemma 3.1]{sebestytarcsaytitkos}). By the Riesz representation theorem there is $\widehat{\beta_n}\in\hilb$ such that
\begin{equation*}
    \int_T \phi~d\beta_n=\sipb{\phi}{\widehat{\beta_n}},\qquad \phi\in\D.
\end{equation*}
Similarly, for $\psi\in \D$ there exists $\psi.\widehat{\beta_n}\in\hilb$ such that
\begin{equation*}
    \int_T \phi\overline{\psi}~d\beta_n=\sipb{\phi}{\psi.\widehat{\beta_n}},\qquad \phi\in\D.
\end{equation*}
It is seen readily that $\|\psi.\widehat{\beta_n}\|_{\bform}\leq\|\psi.\widehat{\beta}\|_{\bform}$.
The embedding operator $J$ \eqref{E:J} is well defined by the absolute continuity, and we conclude by the following line of inequalities
\begin{align*}
    \abs{\sipb{\phi}{\psi.\widehat{\beta_n}}}\leq \int_T \abs{\phi\psi}~d\beta_n\leq c_n\|\psi\|_{\aform}\|\phi\|_{\aform},\qquad \phi\in\D,
\end{align*}
that $\psi.\widehat{\beta_n}\in\dom J^*$ for all $\psi\in\D$. Fix $\psi\in\D$. Since $\sipb{\phi}{\psi.\widehat{\beta_n}}\to \sipb{\phi}{\psi.\widehat{\beta}}$ for $\phi\in\D$ and $\|\psi.\widehat{\beta_n}\|_{\bform}\leq\|\psi.\widehat{\beta}\|_{\bform}$ we infer that $\sipb{f}{\psi.\widehat{\beta_n}}\to \sipb{f}{\psi.\widehat{\beta}}$ for all $f\in\hilb$. Consider now $g\in\{\dom J^*\}^{\perp}$. Then
\begin{equation*}
    \sipb{g}{\psi.\widehat{\beta}}=\limn\sipb{g}{\psi.\widehat{\beta_n}}=0, \qquad \psi\in\D.
\end{equation*}
Since $\D$ is dense in $\hilb$ by definition, we may choose $\seq{\phi}$ of $\D$ such that $\|g-\phi_n\|_{\bform}\to0$. Then for any $\psi\in\D$ 
\begin{equation*}
    0=\sipb{g}{\psi.\widehat{\beta}}=\limn \sipb{\phi_n}{\psi.\widehat{\beta}}=\limn\int_T \phi_n\overline{\psi}~d\beta=\limn\sipb{\phi_n}{\psi}=\sipb{g}{\psi},
\end{equation*}
whence $g=0$. That means that $\dom J^*$ is dense in $\hilb$ and therefore that $J$ is closable. In other words, $\bform$ is $\aform$-absolutely continuous.
\end{proof}

We are now in position to give an extension of the Radon--Nikodym--Darst theorem \cite[Theorem 1]{DarstGreen}:
\begin{theorem}
    Let $\alpha, \beta$ be real or complex valued additive set functions on a ring of sets $\ring$ such that $\alpha$ is nonnegative and $\beta$ is bounded. If $\beta$ is absolutely continuous with respect to $\alpha$ then there is a sequence $\seq{\phi}$ of $\ring$-simple functions such that
     \begin{equation}\label{E:RNadditive}
     \displaystyle \beta(E)=\limn \int_E \phi_n~d\alpha
     \end{equation}
    If $\alpha$ is bounded too, then by letting $\displaystyle \beta_n(E):=\int_T \phi_n~d\alpha$ we have at the same time
    \begin{equation}\label{E:supadditive}
        \sup\limits_{E\in\ring}\abs{\beta-\beta_n}(E)\to 0.
    \end{equation}
\end{theorem}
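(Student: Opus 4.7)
The plan is to reduce everything to Theorem \ref{T:maintheorem} applied to the pair $(\bform,\aform)$, with the representing vector $\widehat{\beta}\in\hilb$ from \eqref{betahat} playing the role of the target vector, and then to translate the resulting Hilbert-space statement back to the language of set functions.

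First I would invoke Lemma \ref{L:additivelemma} for the nonnegative bounded additive set function $\abs{\beta}$, which is $\alpha$-absolutely continuous by hypothesis (recall that absolute continuity of $\beta$ here is phrased in terms of $\abs{\beta}$). This yields that $\bform$ is $\aform$-absolutely continuous, and Theorem \ref{T:maintheorem} then produces a sequence $(\widetilde{\phi}_n)$ in $\D$ such that $\sipb{\phi}{\widehat{\beta}} = \limn \sipa{\phi}{\widetilde{\phi}_n}$ for every $\phi\in\D$, uniformly on the set $\{\phi \in \D : \bform(\phi,\phi) + \aform(\phi,\phi) \leq 1\}$. Setting $\phi_n := \overline{\widetilde{\phi}_n}$ (still a member of $\D$) and specializing to $\phi = \chi_E$, the left-hand side collapses to $\beta(E)$ by \eqref{betahat}, while the right-hand side becomes $\limn \int_E \phi_n\, d\alpha$; this is \eqref{E:RNadditive}.

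For the second claim I would exploit the uniformity clause of Theorem \ref{T:maintheorem}. If $\alpha$ is also bounded, say $\abs{\beta}(E) \leq M$ and $\alpha(E) \leq N$ for every $E \in \ring$, then any $\phi \in \D$ with $\abs{\phi}\leq 1$ is supported in some member of $\ring$ and therefore satisfies $\bform(\phi,\phi) + \aform(\phi,\phi) \leq M + N$. A straightforward rescaling then converts the uniform statement of Theorem \ref{T:maintheorem} into
\[
    \sup_{\phi\in\D,\ \abs{\phi}\leq 1}\Abs{\int_T \phi\, d(\beta - \beta_n)} \longrightarrow 0.
\]
A routine selection argument (given a finite $\ring$-partition $\{E_i\}$ of $E$, choose scalars $c_i$ of modulus $1$ with $c_i(\beta-\beta_n)(E_i) = \abs{(\beta - \beta_n)(E_i)}$ and assemble $\phi = \sum_i c_i \chi_{E_i}\in\D$) identifies this supremum with $\sup_{E\in\ring}\abs{\beta - \beta_n}(E)$, thus yielding \eqref{E:supadditive}.

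The least routine point, and the one I would be most careful about, concerns bookkeeping rather than substance: the inner product in Theorem \ref{T:maintheorem} is conjugate-linear in its second slot, so the functions $\phi_n$ appearing in \eqref{E:RNadditive} must be the complex conjugates of those produced by that theorem. A related subtlety is that $\bform$ is built from the total variation $\abs{\beta}$ rather than from $\beta$ itself, so Lemma \ref{L:additivelemma} must be invoked on the pair $(\alpha,\abs{\beta})$. Once these are in place, the theorem follows by direct specialization of Theorem \ref{T:maintheorem} to $f = \widehat{\beta}$.
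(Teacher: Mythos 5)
Your proposal is correct and follows essentially the same route as the paper: Lemma \ref{L:additivelemma} to get $\aform$-absolute continuity of $\bform$, then Theorem \ref{T:maintheorem} applied to $f=\widehat{\beta}$, specialized to characteristic functions (with the conjugation bookkeeping), and the uniformity clause after rescaling for the second claim. Your unimodular-coefficient selection argument even handles the total variation $\abs{\beta-\beta_n}(E)$ explicitly, a small detail the paper's own proof leaves implicit, since it only estimates $\sup_{E\in\ring}\abs{\beta(E)-\beta_n(E)}$.
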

\begin{proof}
    By Lemma \ref{L:additivelemma} we conclude that $\bform$ is $\aform$-absolutely continuous, and hence also $\aform$-pseudo-absolutely continuous in the view of Theorem \ref{T:maintheorem}. Consequently, there exists a $\seq{\psi}$ of $\D$ such that
    \begin{align*}
        \beta(E)=\sipb{\cha{E}}{\widehat{\beta}} =\limn \sipa{\cha{E}}{\psi_n}=\limn  \int_E \overline{\psi_n}~d\alpha,
    \end{align*}
    which yields \eqref{E:RNadditive} by setting $\phi_n:=\overline{\psi_n}$. Suppose in addition $\alpha$ to be bounded and fix a constant $M$ with $\sup\limits_{E\in\ring}(\alpha(E)+\abs{\beta}(E))\leq M$. Then
    \begin{gather*}
        \sup_{E\in\ring}\abs{\beta(E)-\beta_n(E)}=\sup_{E\in\ring}\abs{\sipb{\cha{E}}{\widehat{\beta}}-\sipa{\cha{E}}{\psi_n}}\\
        \leq\sup_{\set{\phi\in\D}{\aform(\phi,\phi)+\bform(\phi,\phi)\leq M}}\abs{\sipb{\phi}{\widehat{\beta}}-\sipa{\phi}{\psi_n}}\to0,
    \end{gather*}
    according to Theorem \ref{T:maintheorem} again. That yields \eqref{E:supadditive}, indeed. 
\end{proof}
\section{The classical Radon--Nikodym theorem of measures}

In this section we present the classical Radon--Nikodym theorem of measures in the following setting. Let $\ring$ be a $\sigma$-algebra of some subsets of $T$ and let $\mu, \nu$ be finite measures on $\ring$. Similarly to the case of additive set functions, we may associate the nonnegative Hermitian forms $\mform$ and $\nform$ with $\mu$ and $\nu$, respectively,  on the set $\D$ of simple functions. The corresponding Hilbert spaces $\hilm$ and $\hilm$ realize then as the well known function spaces $\lmu$ and $\lnu$, respectively, thanks to the celebrated Riesz--Fischer theorem.

Recall that $\nu$ is called absolutely continuous with respect to $\mu$ if $\mu(E)=0$ implies $\nu(E)=0$ for any measurable set $E$. Our purpose in this section is to prove the classical Radon--Nikodym theorem on the representability of $\nu$ due to $\mu$. Our method of proving is based on the analysis of the canonical embedding operator $J$ of $\lmu$ to $\lnu$.

First of all observe that $\nform$ is $\mform$-absolutely continuous due to Lemma \ref{L:additivelemma}. (For a more elementary proof, employing only classical arguments of the measure theory we refer to \cite[Lemma 5.1]{Hassi2009}.) This means in other words that $J$ is a closable operator from $\lmu$ into $\lnu$, and  therefore the domain of its adjoint $J^*$ is dense in $\lnu$. In the following lemma we investigate $J^*$ in detail:
\begin{lemma}\label{L:measureLemma2}
    Suppose that $\nu$ is absolutely continuous with respect to $\mu$. Then we have the following on $J^*$:
    \begin{enumerate}[\upshape (a)]
      \item $f\in\dom J^*$ implies $\abs{f}\in\dom J^*$.
      \item $J^*$ is a positive operator in the following sense: $f\in\dom J^*, f\geq0$ $\nu$-a.e. implies $J^*f\geq0$ $\mu$-a.e.
      \item If $f,g\geq0$ $\nu$-a.e., $f\in\dom J^*$, $g\in\lnu$ then $f\wedge g\in\dom J^*$.
    \end{enumerate}
\end{lemma}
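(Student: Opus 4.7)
The strategy is to distill one measure-theoretic identity for $J^*$ from which all three claims follow cleanly. For $f \in \dom J^*$, set $h := J^* f \in \lmu$. The defining relation $\sipn{J\phi}{f} = \sipm{\phi}{h}$ for $\phi \in \D$, tested on $\phi = \cha{E}$ (which lies in $\D$ since $\ring$ is a $\sigma$-algebra and $\mu, \nu$ are finite), yields
\[
    \int_E \bar f \, d\nu = \int_E \bar h \, d\mu, \qquad E \in \ring.
\]
Hence the complex measures $\bar f \, d\nu$ and $\bar h \, d\mu$ coincide on $\ring$. Since $\mu, \nu$ are finite, $f \in \mathscr{L}^1(\nu)$ and $h \in \mathscr{L}^1(\mu)$, so passing to total variations yields the equality of positive measures $|f| \, d\nu = |h| \, d\mu$ on $\ring$, and by linearity
\[
    \int \psi \, |f| \, d\nu = \int \psi \, |h| \, d\mu, \qquad \psi \in \D.
\]

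For (a), Cauchy--Schwarz in $\lmu$ then bounds
\[
    \bigl|\sipn{J\phi}{|f|}\bigr| = \biggl|\int \phi \, |f| \, d\nu\biggr| = \biggl|\int \phi \, |h| \, d\mu\biggr| \leq \|\phi\|_\mu \, \|h\|_\mu,
\]
so $\phi \mapsto \sipn{J\phi}{|f|}$ is $\lmu$-continuous on $\D$; hence $|f| \in \dom J^*$ (and, as a bonus, $J^* |f| = |h|$).

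For (b), if $f \geq 0$ $\nu$-a.e., then $\bar f \, d\nu = f \, d\nu$ is a positive measure on $\ring$, and so must $\bar h \, d\mu$ be; testing on the measurable sets $\{\operatorname{Re} h < 0\}$ and $\{\operatorname{Im} h \neq 0\}$ forces each to have $\mu$-measure zero, yielding $J^* f = h \geq 0$ $\mu$-a.e. For (c), $0 \leq f \wedge g \leq g \in \lnu$ places $f \wedge g$ in $\lnu$; writing a complex $\phi \in \D$ in real and imaginary parts and combining $f \wedge g \leq f$ with the identity from (a) gives
\[
    \bigl|\sipn{J\phi}{f \wedge g}\bigr| \leq \int |\phi| \, (f \wedge g) \, d\nu \leq \int |\phi| \, f \, d\nu = \int |\phi| \, |h| \, d\mu \leq \|\phi\|_\mu \, \|h\|_\mu,
\]
which supplies the $\lmu$-continuity needed for $f \wedge g \in \dom J^*$.

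The real content lies in the first paragraph: upgrading the seemingly weak defining relation on $\D$ to an equality of \emph{complex measures} on $\ring$, and thence to the positive-measure identity for absolute values. Once this is secured, parts (a)--(c) reduce to routine Cauchy--Schwarz and straightforward positivity/monotonicity arguments.
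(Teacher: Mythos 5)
Your proof is correct, but it takes a genuinely different route from the paper. The paper never leaves the Hilbert-space/lattice setting: for (a) it bounds $\abs{\sipn{\abs{f}}{\phi}}$ by $\sup\set{\abs{\sipn{f}{\psi}}}{\psi\in\D,\ \abs{\psi}\leq\abs{\phi}}$ (a dominated-simple-function duality estimate, itself requiring a small approximation of the sign of $f$ which the paper leaves as ``seen readily''), and then (b) and (c) follow from positivity of $J^*$ and the bound $\sipn{f}{\abs{\phi}}\leq\|J^*f\|_{\mu}\|\phi\\|_{\mu}$. You instead upgrade the adjoint relation, tested on indicators, to an equality of complex measures $\bar f\,d\nu=\overline{J^*f}\,d\mu$ and pass to total variations to get $\abs{f}\,d\nu=\abs{J^*f}\,d\mu$; from this single identity (a)--(c) are indeed routine, and you get the sharper conclusion $J^*\abs{f}=\abs{J^*f}$ as a bonus, which the paper does not record. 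The trade-off is that you import the classical fact that the total variation of $g\,d\mu$ is $\abs{g}\,d\mu$; since this lemma feeds into the paper's operator-theoretic proof of the Radon--Nikodym theorem, you should justify that fact by simple-function approximation (the easy inequality plus approximating $g$ in $\mathscr{L}^1$ by simple functions), not via the polar decomposition of complex measures, whose standard proof already uses Radon--Nikodym --- done that way, your argument is non-circular and of comparable depth to the paper's unproved sup-inequality. One small imprecision in (b): testing on $\{\operatorname{Im}J^*f\neq0\}$ only gives that the integral of $\operatorname{Im}J^*f$ over it vanishes; test separately on $\{\operatorname{Im}J^*f>0\}$ and $\{\operatorname{Im}J^*f<0\}$ (and on $\{\operatorname{Re}J^*f<0\}$, as you do) to conclude $J^*f\geq0$ $\mu$-a.e.
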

\begin{proof}
    It is seen readily that the following inequality
    \begin{equation*}
        \abs{\sipn{\abs{f}}{\phi}}\leq\sup_{\psi\in\D,\abs{\psi}\leq\abs{\phi}}\abs{\sipn{f}{\psi}}
    \end{equation*}
     holds for any $f\in\lnu$ and $\phi\in\D$.
    Hence for $f\in\dom J^*$ we have
    \begin{equation*}
        \abs{\sipn{\abs{f}}{\phi}}\leq \|J^*f\|_{\mu}\|\phi\|_{\mu},\qquad \phi\in\D,
    \end{equation*}
    which yields $\abs{f}\in\dom J^*$. This proves (a). Let $f\in\dom J^*$, $f\geq0$ $\nu$-a.e. Then for each $\phi\in\D$, $\phi\geq0$ we have
    \begin{equation*}
        \sipm{J^*f}{\phi}=\sipn{f}{\phi}\geq0.
    \end{equation*}
    Hence $J^*f\geq0$ $\mu$-a.e. which yields (b). Finally, if  $f,g\geq0$ $\nu$-a.e., $f\in\dom J^*$, $g\in\lnu$ then
    \begin{align*}
        \abs{\sipn{f\wedge g}{\phi}}\leq \sipn{f}{\abs{\phi}}\leq \|J^*f\|_{\mu}\|\phi\|_{\mu}
    \end{align*}
     for all $\phi\in\D$. Hence $f\wedge g\in\dom J^*$ which proves (c).
\end{proof}
We are now in position to prove the main result of the section. Our treatment below is probably not the easiest way to get the Radon--Nikodym derivative. However, it may be interesting for an operator theorist.
\begin{theorem}
    Assume that $\nu$ is absolutely continuous with respect to $\mu$. Then there exists a $\mu$-integrable function $f$ such that
    \begin{equation*}
        \nu(E)=\int_E f~d\mu,\qquad E\in\ring.
    \end{equation*}
\end{theorem}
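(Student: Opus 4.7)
My plan is to exploit the lattice properties of $\dom J^*$ given by Lemma \ref{L:measureLemma2}, together with the density of $\dom J^*$ in $\lnu$, in order to build the Radon--Nikodym derivative directly as a monotone limit of functions of the form $J^*\tilde g_n$.

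By Lemma \ref{L:additivelemma} (applied to the measures $\mu$ and $\nu$, both of which are finite hence bounded), $\nform$ is $\mform$-absolutely continuous, so the canonical embedding $J\colon\D\subseteq\lmu\to\lnu$ is closable and $\dom J^*$ is dense in $\lnu$. Since $\nu$ is finite, the constant function $\cha{T}$ lies in $\lnu$, and I would approximate it by a sequence $(h_n)\subseteq\dom J^*$. Using Lemma \ref{L:measureLemma2}(a) and (c) I can replace $h_n$ by $g_n:=\abs{h_n}\wedge\cha{T}$, which still lies in $\dom J^*$, satisfies $0\leq g_n\leq\cha{T}$ $\nu$-a.e., and tends to $\cha{T}$ in $\lnu$ (the truncation being $1$-Lipschitz). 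To obtain a non-decreasing sequence I would pass to $\tilde g_n := \max(g_1,\dots,g_n)$; using the identity $\max(a,b)=a+b-a\wedge b$ together with Lemma \ref{L:measureLemma2}(c), each $\tilde g_n$ again belongs to $\dom J^*$, and the new sequence is non-decreasing, still bounded above by $\cha{T}$ $\nu$-a.e., and still converges to $\cha{T}$ in $\lnu$ (since $0\leq \cha{T}-\tilde g_n \leq \cha{T}-g_n$).

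Set $f_n:=J^*\tilde g_n$. By Lemma \ref{L:measureLemma2}(b), $f_n\geq 0$ $\mu$-a.e.; applied to the nonnegative increment $\tilde g_{n+1}-\tilde g_n\in\dom J^*$ the same part yields $f_{n+1}\geq f_n$ $\mu$-a.e. Together with the integral bound
\begin{equation*}
\int_T f_n\,d\mu = \sipm{f_n}{\cha{T}} = \sipn{\tilde g_n}{\cha{T}} = \int_T \tilde g_n\,d\nu \leq \nu(T),
\end{equation*}
the monotone convergence theorem then produces $f:=\lim_n f_n$, a nonnegative $\mu$-integrable function. For arbitrary $E\in\ring$ the analogous identity
\begin{equation*}
\int_E f_n\,d\mu = \sipm{f_n}{\cha{E}} = \sipn{\tilde g_n}{\cha{E}} = \int_E \tilde g_n\,d\nu
\end{equation*}
becomes, upon passing to the limit, $\int_E f\,d\mu = \nu(E)$: the left-hand side by monotone convergence, the right-hand side by continuity of $\sipn{\cdot}{\cha{E}}$ on $\lnu$.

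The main obstacle is precisely the construction of the monotone sequence $(\tilde g_n)$ inside $\dom J^*$. A naive choice $g_n\to\cha{T}$ in $\lnu$ yields only $\int_E J^*g_n\,d\mu\to\nu(E)$, which is the pseudo-absolute continuity of Theorem \ref{T:maintheorem} and is too weak to identify a single function $f$. The lattice closure of $\dom J^*$ in parts (a) and (c), combined with the positivity property (b), is exactly what lets one transfer pointwise monotonicity from $\lnu$ to $\lmu$ and hence extract an almost-everywhere limit that serves as the Radon--Nikodym derivative.
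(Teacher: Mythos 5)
Your proposal is correct and follows essentially the same route as the paper: both build a nondecreasing sequence in $\dom J^*$ converging to $1$ via the lattice properties of Lemma \ref{L:measureLemma2}(a),(c), and both use the positivity in part (b) to pass the monotonicity through $J^*$. The only (cosmetic) difference is at the final step, where you obtain $f$ by the monotone convergence theorem from the $\mu$-a.e.\ increasing, uniformly integrable sequence $J^*\tilde g_n$, while the paper gets the same limit by showing $(J^*f_n)$ is Cauchy in $\mathscr{L}^1(\mu)$ and invoking Riesz--Fischer.
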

\begin{proof}
   By the closability of $J$, there exists $\seq{g}$ of $\dom J^*$ such that $g_n\to 1$ in $\lnu$ and simultaneously, $g_n\to 1$ $\nu$-a.e. Set
   \begin{equation*}
    f_n:= 1\wedge\left(\bigvee_{k=1}^{n} \abs{g_n}\right).
   \end{equation*}
   Then $f_n\leq f_{n+1}$ and that $f_n\to 1$ $\nu$-a.e.,  whence $f_n\to1$ in $\lnu$, by the B. Levi theorem.  At the same time, $f_n\in\dom J^*$, thanks to Lemma \ref{L:measureLemma2} (a) and (c). The positivity of $J^*$ (Lemma \ref{L:measureLemma2} (b)) yields then
   \begin{align*}
    \int_T\abs{J^*f_n-J^*f_m}~d\mu=\sipm{J^*(f_n-f_m)}{1}=\sipn{f_n-f_m}{1}, \qquad n\geq m.
\end{align*}
   By letting $n,m\to\infty$ we infer that $\seq{J^*f}$ converges to a ($\mu$-a.e. nonnegative) function $f\in\mathscr{L}^1(\mu)$, due to the Riesz--Fischer theorem. Then $f$ satisfies \begin{align*}
    \int_E f~d\mu=\limn\int_E J^*f_n~d\mu=\limn\sipm{J^*f_n}{\cha{E}}=\limn\sipn{f_n}{\cha{E}}=\nu(E)
\end{align*}
for all $E\in\ring$. The proof is therefore complete.
\end{proof}

By Lemma  \ref{L:measureLemma2} one concludes that $\dom J^*$ is a linear function lattice which possesses the Stone property: $1\wedge f\in\dom J^*$ for $f\in\dom J^*$. Nevertheless, $1$ does not belongs to $\dom J^*$ in general. As it turns out from Proposition \ref{P:RNderivative} below, $1\in\dom J^*$ may happen only in the case when the Radon--Nikodym derivative $\dfrac{d\nu}{d\mu}$ belongs to $\lmu$ and  in that case $\dfrac{d\nu}{d\mu}=J^*1$.
\begin{proposition}\label{P:RNderivative}
    Assume that $\nu$ is absolutely continuous with respect to $\mu$. Then the following assertions are equivalent:
    \begin{enumerate}[\upshape (i)]
      \item The Radon--Nikodym derivative $\dfrac{d\nu}{d\mu}$ belongs to $\lmu$;
      \item $1\in\dom J^*$;
      \item $\D\subseteq\dom J^*$;
      \item There is a nonnegative constant $C\geq0$ such that
      \begin{equation*}
        \Big\lvert\int_{T} \phi~ d\nu\Big\rvert^2\leq C\int_T \abs{\phi}^2~d\mu,\qquad \phi\in\D.
      \end{equation*}
    \end{enumerate}
    In any case, $\dfrac{d\nu}{d\mu}=J^*1$.
\end{proposition}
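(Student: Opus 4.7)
The plan is to prove three pairs of implications that together yield the equivalence, with the identification $\frac{d\nu}{d\mu}=J^{*}1$ emerging from one of them. The central observation is that $f\in\dom J^{*}$ means exactly that the antilinear functional $\phi\mapsto\sipn{\phi}{f}$ on $\D$ is bounded in the $\lmu$-norm, in which case the Riesz representative in $\lmu$ is precisely $J^{*}f$. In particular, specializing to $f=1$, membership $1\in\dom J^{*}$ just says that $\phi\mapsto\int_T\phi\,d\nu$ is $\lmu$-continuous on $\D$.

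For (ii) $\Leftrightarrow$ (iv): the implication (ii) $\Rightarrow$ (iv) is immediate from $|\sipn{\phi}{1}|=|\sipm{\phi}{J^{*}1}|\leq\|J^{*}1\|_{\mu}\|\phi\|_{\mu}$, giving (iv) with $C=\|J^{*}1\|_{\mu}^{2}$. Conversely, (iv) together with the density of $\D$ in $\lmu$ and the Riesz representation theorem produces $h\in\lmu$ with $\int_T\phi\,d\nu=\sipm{\phi}{h}$ for all $\phi\in\D$, which is exactly $1\in\dom J^{*}$ with $J^{*}1=h$.

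For (ii) $\Leftrightarrow$ (iii): (iii) $\Rightarrow$ (ii) is trivial since $1=\cha{T}\in\D$. For the converse I would set $h:=J^{*}1$ and invoke Lemma \ref{L:measureLemma2}(b) to obtain $h\geq 0$ $\mu$-a.e., so in particular $h$ is real-valued. Given any $\psi\in\D$, boundedness of $\psi$ ensures $\psi h\in\lmu$; and for every $\phi\in\D$ one has $\phi\bar\psi\in\D$, whence
\[
\sipn{\phi}{\psi}=\int_T\phi\bar\psi\,d\nu=\sipm{\phi\bar\psi}{h}=\int_T\phi\bar\psi\, h\,d\mu=\sipm{\phi}{\psi h},
\]
which shows $\psi\in\dom J^{*}$ with $J^{*}\psi=\psi h$, and hence (iii).

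Finally, for (i) $\Leftrightarrow$ (ii) together with the identification $\frac{d\nu}{d\mu}=J^{*}1$: if $g:=\frac{d\nu}{d\mu}\in\lmu$ (so $g\geq 0$ $\mu$-a.e.), then $\int_T\phi\,d\nu=\int_T\phi g\,d\mu=\sipm{\phi}{g}$ for every $\phi\in\D$, giving $1\in\dom J^{*}$ with $J^{*}1=g$. Conversely, given $h=J^{*}1\in\lmu$, nonnegativity of $h$ from Lemma \ref{L:measureLemma2}(b) lets us read the identity $\sipn{\cha{E}}{1}=\sipm{\cha{E}}{h}$ as $\nu(E)=\int_E h\,d\mu$ for every $E\in\ring$, so uniqueness of the Radon--Nikodym derivative forces $\frac{d\nu}{d\mu}=h\in\lmu$. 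The only non-routine step is the appeal to Lemma \ref{L:measureLemma2}(b) to know $J^{*}1$ is real-valued, which is what legitimizes the ``move $\psi$ across the inner product'' computation in (ii) $\Rightarrow$ (iii); everything else is a direct application of Riesz's theorem and uniqueness of Radon--Nikodym derivatives.
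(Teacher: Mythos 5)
Your proof is correct, and its overall architecture is the same as the paper's: both reduce (ii)$\Leftrightarrow$(iv) to the definition of $\dom J^*$ via Riesz representation, both get (i)$\Leftrightarrow$(ii) together with the identification $\frac{d\nu}{d\mu}=J^*1$ from the identity $\int_T\phi\,d\nu=\sipn{J\phi}{1}$ and Lemma \ref{L:measureLemma2}(b) plus uniqueness of the Radon--Nikodym derivative. The one place where you genuinely deviate is (ii)$\Rightarrow$(iii): the paper disposes of it with a bare reference to Lemma \ref{L:measureLemma2}, the intended mechanism being the lattice property (c) (for $E\in\ring$ one has $\cha{E}=1\wedge\cha{E}\in\dom J^*$, and linearity gives $\D\subseteq\dom J^*$), whereas you argue directly that $J^*\psi=\psi\, J^*1$ for every $\psi\in\D$ by passing $\bar\psi$ across the $\nu$-inner product, using only that products of simple functions are simple and that $J^*1\geq 0$ $\mu$-a.e. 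Your route avoids the lattice machinery of parts (a) and (c) entirely and yields the extra explicit formula for $J^*$ on $\D$; note also that the appeal to realness of $J^*1$, while legitimate, is not even essential there, since $\sipm{\phi\bar\psi}{h}=\sipm{\phi}{\psi h}$ holds for arbitrary $h\in\lmu$ by conjugation bookkeeping. Both arguments are equally rigorous; the paper's is shorter given that Lemma \ref{L:measureLemma2} is already established, yours is more self-contained and more informative.
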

\begin{proof}
    Assume first that $\dfrac{d\nu}{d\mu}\in\lmu$. Then for any $\phi\in\D$
    \begin{equation*}
        \sipm[\Big]{\phi}{\dfrac{d\nu}{d\mu}}=\int_T \phi~d\nu=\sipn{J\phi}{1}.
    \end{equation*}
    This implies that $1\in \dom J^*$ and that $J^*1=\dfrac{d\nu}{d\mu}.$ That (ii) implies (iii) follows by Lemma \ref{L:measureLemma2}. Assertion (iv) expresses precisely that $1\in\dom J^*$. Finally, if we assume (ii) then $J^*1\geq0$ $\nu$-a.e. by Lemma \ref{L:measureLemma2} and
    \begin{equation*}
        \int_T \phi~d\nu=\sipn{J\phi}{1}=\sipm{\phi}{J^*1}=\int_T \phi\cdot J^*1~d\mu,\qquad \phi\in\D.
    \end{equation*}
    Consequently, $J^*1=\dfrac{d\nu}{d\mu}$ as it is claimed.
\end{proof}
\section{Radon--Nikodym theorems for representable positive functionals on $^*$-algebras}

Throughout this section we fix a (not necessarily unital) $^*$-algebra $\alg$ and two positive functionals $v,w$ on it.  We shall assume $v,w$ to be representable, that is to say, there exists a Hilbert space $\hilv$ (resp., $\hilw$), a $^*$-representation $\pi_v$ (resp., $\pi_w$) of $\alg$ to $\mathscr{B}(\hilv)$ (resp., $\mathscr{B}(\hilw)$) and a cyclic vector $\zeta_v$ (resp., $\zeta_w$) such that
\begin{equation*}
     v(a)=\sipv{\pi_v(a)\zeta_v}{\zeta_v},\qquad w(a)=\sipw{\pi_w(a)\zeta_w}{\zeta_w},\qquad a\in\alg.
\end{equation*}
The cyclicity of $\zeta_v$ (resp., $\zeta_w$) above means that $\pi_v\langle\alg\rangle\zeta_v:=\set{\pi_v(a)\zeta_v}{a\in\alg}$  (resp., $\pi_w\langle\alg\rangle\zeta_w$) is dense in $\hilv$ (resp., in $\hilw$).

Such a triple $(\hilv,\pi_v,\zeta_v)$ is obtained via the well-known GNS construction: set $\vform(a,b)=v(b^*a)$, $\mathscr{N}_{\vform}:=\set{x\in\alg}{\vform(x,x)=0}$, let $\hilv:=\hil_{\vform}$, and define  
\begin{equation*}
\pi_v(a)(b+\mathscr{N}_{\vform}):=ab+\mathscr{N}_{\vform}.
\end{equation*}
The cyclic vector is provided by Riesz representation theorem applied to the densely defined continuous linear functional 
\begin{equation*}
\hilv\to \dupC, \quad a+\mathscr{N}_{\vform}\mapsto v(a).
\end{equation*}
See eg. \cite{Sebestyen84} for the details. The triple $(\hilw,\pi_w,\zeta_w)$ can be constructed analogously.

We recall now the notion of absolute continuity in this context: $w$ is said to be absolutely continuous with respect to $v$ if
\begin{equation*}
    v(a_n^*a_n)\to0\qquad\textrm{and}\qquad w((a_n^*-a^*_m)(a_n-a_m))\to0, \qquad n,m\to\infty,
\end{equation*}
imply $w(a_n^*a_n)\to0$. We notice here that  Gudder \cite{Gudder} called $w$ \emph{strongly $v$-absolutely continuous} in the above case. Observe immediately that $w$ is $v$-absolutely continuous if and only if $\wform$ is $\vform$-absolutely continuous. Equivalently, in the language of Hilbert space operators that means that  the mapping
\begin{equation*}
    J:\hilv\to\hilw,\qquad \pi_v(a)\zeta_v\mapsto\pi_w(a)\zeta_w,\qquad a\in\alg,
\end{equation*}
is a closable operator. 

Our first Radon--Nikodym type result in this setting is a reformulation of Theorem \ref{T:maintheorem}:
\begin{theorem}\label{T:RNrepresentable1}
    If $w$ is $v$-absolutely continuous then there exists $\seq{a}$ of $\alg$ such that
    \begin{equation*}
        w(a)=\limn v(a_n^*a),\qquad a\in\alg.
    \end{equation*}
    Moreover, the convergence is uniform on the set $\set{a\in\alg}{w(a^*a)+v(a^*a)\leq1}$.
\end{theorem}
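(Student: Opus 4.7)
The plan is to view Theorem \ref{T:RNrepresentable1} as a direct specialization of Theorem \ref{T:maintheorem} with $\D=\alg$, $\sform=\wform$ and $\tform=\vform$, where $\wform(a,b)=w(b^*a)$ and $\vform(a,b)=v(b^*a)$. The hypothesis that $w$ is $v$-absolutely continuous translates verbatim to the $\vform$-absolute continuity of $\wform$, as already recorded in the paragraph preceding the theorem, so Theorem \ref{T:maintheorem} applies.

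The only additional ingredient is to identify the correct element $f\in\hilw$ to test against. The natural choice is the GNS cyclic vector $\zeta_w\in\hilw$, which by construction (via the Riesz representation theorem applied to $a+\mathscr{N}_{\wform}\mapsto w(a)$) satisfies $\sipw{a}{\zeta_w}=w(a)$ for every $a\in\alg$, with $a$ identified with its class in $\alg/\mathscr{N}_{\wform}$ as agreed in Section 2. With this choice, Theorem \ref{T:maintheorem} supplies a sequence $\seq{a}$ in $\alg$ such that $\sipw{a}{\zeta_w}=\limn\sipv{a}{a_n}$ for all $a\in\alg$. Unwinding the definitions of the two inner products gives $w(a)=\limn v(a_n^*a)$, which is the asserted representation.

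For the uniformity, Theorem \ref{T:maintheorem} asserts uniform convergence on $\{a\in\D:\sform(a,a)+\tform(a,a)\leq 1\}$, which under the present identification reads $\{a\in\alg:\wform(a,a)+\vform(a,a)\leq 1\}=\{a\in\alg:w(a^*a)+v(a^*a)\leq 1\}$, exactly as in the statement.

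No substantive obstacle is anticipated, since the content has been packaged into Theorem \ref{T:maintheorem}. The one minor point requiring care is the placement of the adjoint in the limit: because $\sipv{a}{a_n}=\vform(a,a_n)=v(a_n^*a)$, the sequence given by Theorem \ref{T:maintheorem} naturally produces the expression $v(a_n^*a)$ (and not $v(a^*a_n)$), which is precisely the form in which the conclusion is stated.
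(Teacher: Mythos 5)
Your proposal is correct and follows essentially the same route as the paper: both apply Theorem \ref{T:maintheorem} with $\sform=\wform$, $\tform=\vform$ and test against the cyclic vector $\zeta_w\in\hilw$ (the paper writes $w(a)=\sipw{\pi_w(a)\zeta_w}{\zeta_w}$, which under the GNS identification $\pi_w(a)\zeta_w=a+\mathscr{N}_{\wform}$ is exactly your choice $f=\zeta_w$), and the uniformity statement transfers verbatim.
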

\begin{proof}
    In the view of Theorem \ref{T:maintheorem}, $\wform$ is $\vform$-pseudo-absolutely continuous. That means that there exists a sequence $\seq{a}$ of $\alg$ such that
    \begin{equation*}
        w(a)=\sipw{\pi_w(a)\zeta_w}{\zeta_w}=\limn \sipv{\pi_v(a)\zeta_v}{\pi_v(a_n)\zeta_v}=\limn v(a_n^*a),
    \end{equation*}
    for all $a\in\alg$, as it is claimed. That the convergence on $\set{a\in\alg}{w(a^*a)+v(a^*a)\leq1}$ is uniform follows immediately form Theorem \ref{T:maintheorem}.
\end{proof}
\begin{corollary}
    Assume that $\alg$ is a Banach $^*$-algebra and let $v,w$ be  representable positive functionals such that $w$ is $v$-absolutely continuous. Then there is a sequence $\seq{a}$ such that $w_n(a):=v(a_n^*a), a\in\alg,$ satisfies $\|w-w_n\|\to0$.
\end{corollary}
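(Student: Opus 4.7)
My approach is to derive the corollary directly from Theorem~\ref{T:RNrepresentable1}, using the Banach structure to upgrade the stated uniform convergence on a $v$-and-$w$-bounded set to uniform convergence on the Banach-algebra unit ball. First I would apply Theorem~\ref{T:RNrepresentable1} to produce a sequence $\seq{a}$ of $\alg$ satisfying $w(a)=\limn v(a_n^*a)$ for every $a\in\alg$, with uniform convergence on $B_0:=\set{a\in\alg}{w(a^*a)+v(a^*a)\leq1}$. Writing $w_n(a):=v(a_n^*a)$, the goal $\|w-w_n\|\to 0$ is by definition equivalent to uniform convergence on the closed unit ball $B:=\set{a\in\alg}{\|a\|\leq 1}$.

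The crucial extra input is a quadratic bound of the form
$$v(a^*a)+w(a^*a)\leq M\,\|a\|^2\qquad\textrm{for all $a\in\alg$,}$$
with some constant $M>0$ depending only on $v$ and $w$. To see this, I would use that any representable positive functional on a Banach $^*$-algebra is automatically continuous: since $v(a)=\sipv{\pi_v(a)\zeta_v}{\zeta_v}$ and $\pi_v$ is a continuous $^*$-representation, one obtains $|v(a)|\leq \|v\|\,\|a\|$ and consequently $v(a^*a)\leq \|v\|\,\|a^*a\|\leq C\|v\|\,\|a\|^2$, where $C$ absorbs a norm bound on the involution. The analogous estimate for $w$ yields the displayed inequality with $M:=C(\|v\|+\|w\|)$. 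In particular, any $a\in B$ satisfies $a/\sqrt{M}\in B_0$.

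Putting the pieces together, by homogeneity of $a\mapsto w(a)-v(a_n^*a)$ in $a$ I would conclude
$$\|w-w_n\|=\sup_{a\in B}\abs{w(a)-v(a_n^*a)}\leq \sqrt{M}\,\sup_{b\in B_0}\abs{w(b)-v(a_n^*b)},$$
and the right-hand side tends to $0$ by the uniform-convergence assertion of Theorem~\ref{T:RNrepresentable1}. The single (minor) obstacle is the automatic-continuity step establishing the quadratic bound; this is the only place where the Banach hypothesis is genuinely used, while the rest is a formal rescaling argument on top of Theorem~\ref{T:RNrepresentable1}.
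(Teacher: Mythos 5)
Your proposal is correct, but it reaches the conclusion by a different route than the paper. The paper does not use the uniform-convergence clause of Theorem \ref{T:RNrepresentable1} as a black box; instead it re-enters the proof of Theorem \ref{T:maintheorem}, recalling that the sequence $\seq{a}$ can be chosen with $\|J^*\xi_n-\pi_v(a_n)\zeta_v\|_v\le 1/n$ for some $\xi_n\in\dom J^*$ with $\|\xi_n-\zeta_w\|_w\to0$, and then estimates, for $\|a\|\le1$,
\begin{equation*}
\abs{(w-w_n)(a)}\le \sup_{\|a\|\le1}\|\pi_w(a)\zeta_w\|_w\,\|\xi_n-\zeta_w\|_w+\sup_{\|a\|\le1}\|\pi_v(a)\zeta_v\|_v\,\|J^*\xi_n-\pi_v(a_n)\zeta_v\|_v,
\end{equation*}
the two suprema being finite because $^*$-representations of a Banach $^*$-algebra are continuous. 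You instead take the statement of Theorem \ref{T:RNrepresentable1} at face value (uniform convergence on $B_0=\set{a\in\alg}{w(a^*a)+v(a^*a)\le1}$) and transfer it to the norm unit ball by the quadratic bound $v(a^*a)+w(a^*a)\le M\|a\|^2$ together with homogeneity of the linear functional $a\mapsto w(a)-v(a_n^*a)$; this rescaling step is valid, and your quadratic bound rests on exactly the same automatic-continuity fact the paper invokes (it is cleaner to get it directly from $v(a^*a)=\|\pi_v(a)\zeta_v\|_v^2\le\|\pi_v(a)\|^2\|\zeta_v\|_v^2$ than via $\abs{v(a)}\le\|v\|\,\|a\|$ and $\|a^*a\|\le C\|a\|^2$, which additionally uses continuity of the involution, but both are standard for Banach $^*$-algebras). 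The trade-off: your argument is a purely formal deduction from the stated theorem and would apply verbatim whenever the forms $\vform+\wform$ are norm-dominated, while the paper's argument exhibits the explicit source of the uniformity (the two GNS approximation errors) at the cost of reopening the earlier proof. Both are complete; yours is arguably the more modular presentation.
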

\begin{proof}
    According to the proof of Theorem \ref{T:maintheorem}, $\seq{a}$ of Theorem \ref{T:RNrepresentable1} may be chosen such that $\|J^*\xi_n-\pi_v(a_n)\zeta_v\|_{v}\leq1/n$ where $\seq{\xi}$ is a sequence of $\dom J^*\subseteq\hilw$  that satisfies $\|\xi_n-\zeta_w\|_{w}\to0$. Then we have for $a\in\alg, \|a\|\leq1$ that
    \begin{align*}
        \abs{(w-w_n)(a)}\leq&\Abs{\sipw{\pi_w(a)\zeta_w}{\zeta_w}-\sipw{\pi_w(a)\zeta_w}{\xi_n}}\\
        +&\Abs{\sipw{\pi_w(a)\zeta_w}{\xi_n}-\sipv{\pi_v(a)\zeta_v}{\pi_v(a_n)\zeta_v}}\\
\leq& \sup_{a\in\alg,\|a\|\leq1} \|\pi_w(a)\zeta_w\|_{w}\|\xi_n-\zeta_w\|_{w}\\
        +&\sup_{a\in\alg,\|a\|\leq1} \|\pi_v(a)\zeta_v\|_{v}\|J^*\xi_n-\pi_v(a_n)\zeta_v\|_{v}.
    \end{align*}
    Since each $^*$-representation of a Banach $^*$-algebra is continuous, we conclude that  $\|w-w_n\|\to0$.
\end{proof}
The following result is an extension of Gudder's Radon--Nikodym theorem \cite[Theorem 1]{Gudder} for not necessarily unital $^*$-algebras:
\begin{theorem}
    Let $\alg$ be a $^*$-algebra and let $v,w$ be representable functionals on it. The following assertions are equivalent:
    \begin{enumerate}[\upshape (i)]
      \item $w$ is $v$-absolutely continuous;
      \item There exists a positive selfadjoint operator $W$ on $\hilv$ such that $\pi_v\langle\alg\rangle\zeta_v\subseteq\dom W$ and
    \begin{equation}\label{E:W1}
        w(b^*a)=\sipv{W\pi_v(a)\zeta_v}{W\pi_v(b)\zeta_v},\qquad a,b\in\alg.
    \end{equation}
     Furthermore, $\pi_v\langle\alg\rangle\xi\subseteq\dom W$ for $\xi\in\dom W$ and
    \begin{equation}\label{E:W2}
        \sipv{W\pi_v(a)\xi}{W\eta}=\sipv{W\xi}{W\pi_v(a^*)\eta},\qquad a\in\alg, \xi,\eta\in\dom W.
    \end{equation}
    \end{enumerate}
\end{theorem}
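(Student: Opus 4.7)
The plan is to reduce the theorem to Theorem \ref{T:RNforms} applied to the forms $\vform(a,b) = v(b^*a)$ and $\wform(a,b) = w(b^*a)$ on $\alg$. Under the identification $a + \mathscr{N}_{\vform} \leftrightarrow \pi_v(a)\zeta_v$, the GNS Hilbert space $\hilv$ plays the role of $\hilt$ in Theorem \ref{T:RNforms} and the dense subspace $\pi_v\langle\alg\rangle\zeta_v$ corresponds to $\D$. Since $w$ is $v$-absolutely continuous precisely when $\wform$ is $\vform$-absolutely continuous, the existence of a positive selfadjoint $W$ on $\hilv$ with $\pi_v\langle\alg\rangle\zeta_v \subseteq \dom W$ satisfying \eqref{E:W1} follows directly from Theorem \ref{T:RNforms} by setting $W := S^{1/2}$; conversely, one sets $S := W^2$ to recover (i) from the existence of $W$.

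The main work is establishing the additional property \eqref{E:W2} from (i). For this, I would unwind the construction in the proof of Theorem \ref{T:RNforms}: $S = J^*J^{**}$, where $J\colon \pi_v\langle\alg\rangle\zeta_v \to \hilw$, $J\pi_v(a)\zeta_v = \pi_w(a)\zeta_w$, is the closable embedding. By polar decomposition $J^{**} = UW$ with $U$ a partial isometry, whence $\dom W = \dom J^{**}$ and $\sipv{W\xi}{W\eta} = \sipw{J^{**}\xi}{J^{**}\eta}$ for all $\xi,\eta \in \dom W$. The crucial intermediate claim is that $J^{**}$ intertwines the GNS representations: for every $\xi \in \dom J^{**}$ and $a \in \alg$, $\pi_v(a)\xi \in \dom J^{**}$ and $J^{**}\pi_v(a)\xi = \pi_w(a)J^{**}\xi$. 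One proves this by approximating $\xi$ by a sequence $\xi_n = \pi_v(b_n)\zeta_v \in \dom J$ with $J\xi_n \to J^{**}\xi$, applying the bounded operators $\pi_v(a) \in \mathscr{B}(\hilv)$ and $\pi_w(a) \in \mathscr{B}(\hilw)$ to both convergences, and invoking closedness of $J^{**}$.

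Once the intertwining is in hand, $\pi_v\langle\alg\rangle\xi \subseteq \dom J^{**} = \dom W$ for $\xi \in \dom W$, and \eqref{E:W2} reduces to the computation
\begin{align*}
\sipv{W\pi_v(a)\xi}{W\eta} &= \sipw{J^{**}\pi_v(a)\xi}{J^{**}\eta} = \sipw{\pi_w(a)J^{**}\xi}{J^{**}\eta} \\
&= \sipw{J^{**}\xi}{\pi_w(a^*)J^{**}\eta} = \sipv{W\xi}{W\pi_v(a^*)\eta},
\end{align*}
using $\pi_w(a)^* = \pi_w(a^*)$ and applying the intertwining a second time. The main obstacle is precisely the extension of the intertwining from the dense domain $\dom J$ to the full domain $\dom J^{**}$, which relies essentially on the boundedness of $\pi_v(a)$ and $\pi_w(a)$ so that both the approximations in $\hilv$ and their $J$-images in $\hilw$ survive under application of the representation operators.
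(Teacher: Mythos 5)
Your proposal is correct and follows essentially the same route as the paper: reduce to Theorem \ref{T:RNforms} with $W=(J^*J^{**})^{1/2}$, then establish the intertwining relation $J^{**}\pi_v(a)\supseteq \pi_w(a)J^{**}$ by approximating $\xi\in\dom J^{**}$ from the core $\pi_v\langle\alg\rangle\zeta_v$ and using closedness of $J^{**}$ together with boundedness of $\pi_v(a),\pi_w(a)$. The only (minor) difference is the final step: the paper passes to the adjoint relation $\pi_v(a)J^*\subseteq J^*\pi_w(a)$, verifies the identity on $\dom J^*J^{**}$ and invokes that this domain is a core for $W$, whereas you use the polar-decomposition identity $\sipv{W\xi}{W\eta}=\sipw{J^{**}\xi}{J^{**}\eta}$ on all of $\dom W$, which yields \eqref{E:W2} directly and slightly more cleanly.
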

\begin{proof}
    An application of Theorem \ref{T:RNforms} shows that (ii) implies (i), and that, by assuming (i),  $W:=(J^*J^{**})^{1/2}$ fulfills \eqref{E:W1}. It remains to show that $W$ fulfills \eqref{E:W2}. Let $a\in\alg$ and $\xi\in\dom W=\dom J^{**}$. Then there exists $\seq{a}$ of $\alg$ such that $\pi_v(a_n)\zeta_v\to\xi$ and that $\pi_w(a_n)\zeta_w\to J^{**}\xi$. Hence $\pi_v(a)\pi_v(a_n)\zeta_v\to\pi_v(a)\xi$ and $J^{**}(\pi_v(a)\pi_v(a_n)\zeta_v)=\pi_w(aa_n)\zeta_w\to\pi_w(a)J^{**}\xi$. This means that $\pi_v(a)\xi\in\dom J^{**}$ and that $J^{**}\pi_v(a)\xi=\pi_w(a)J^{**}\xi$. In particular we infer that
    \begin{equation}\label{E:J**piv}
        J^{**} \pi_v(a)\supseteq\pi_w(a)J^{**},\qquad a\in\alg.
    \end{equation}
    By taking adjoint,
    \begin{equation}\label{E:J*piv}
        \pi_v(a)J^{*}\subseteq J^{*}\pi_w(a),\qquad a\in\alg.
    \end{equation}
    Thus an easy calculation shows that
    \begin{align*}
        \sipv{J^*J^{**}\pi_v(a)\xi}{\eta}=\sipv{\xi}{J^*J^{**}\pi_v(a^*)\eta}
    \end{align*}
    holds for all $\xi,\eta\in\dom J^*J^{**}$. That implies \eqref{E:W2} by noticing that $\dom J^*J^{**}$ is core for $W$.
\end{proof}
\begin{corollary}\label{C:domJ}
    Assume that $w$ is $v$-absolute continuous and that $\zeta_v\in\dom W$ (that satisfies e.g. if $\alg$ is unital, or if $w(a^*a)\leq Cv(a^*a)$,  $a\in\alg$, for some $C\geq0$). Then $J^{**}\zeta_v=\zeta_w$ and
    \begin{equation}\label{E:w(a)=}
        w(a)=\sipv{W\pi_v(a)\zeta_v}{W\zeta_v},\qquad a\in\alg.
    \end{equation}
\end{corollary}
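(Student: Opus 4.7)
My plan is to reduce everything to showing $J^{**}\zeta_v=\zeta_w$; once this is established, \eqref{E:w(a)=} is immediate. Indeed, $w(a)=\sipw{\pi_w(a)\zeta_w}{\zeta_w}=\sipw{\pi_w(a)J^{**}\zeta_v}{J^{**}\zeta_v}$, and by \eqref{E:J**piv} we have $\pi_w(a)J^{**}\zeta_v=J^{**}\pi_v(a)\zeta_v$, so the identity $\sipw{J^{**}\xi}{J^{**}\eta}=\sipv{W\xi}{W\eta}$ (which holds for all $\xi,\eta\in\dom J^{**}=\dom W$, as in the proof of Theorem~\ref{T:RNforms}) applied with $\xi=\pi_v(a)\zeta_v$ and $\eta=\zeta_v$ yields \eqref{E:w(a)=}. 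To prove $J^{**}\zeta_v=\zeta_w$, the hypothesis $\zeta_v\in\dom J^{**}$ furnishes a sequence $(a_n)\subseteq\alg$ with $\pi_v(a_n)\zeta_v\to\zeta_v$ in $\hilv$ and $\pi_w(a_n)\zeta_w=J\pi_v(a_n)\zeta_v\to J^{**}\zeta_v$ in $\hilw$.

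The main step is to propagate these limits through each representation operator. Fixing $b\in\alg$, the boundedness of $\pi_v(b)$ and $\pi_w(b)$ yields $\pi_v(ba_n)\zeta_v\to\pi_v(b)\zeta_v$ and $J\pi_v(ba_n)\zeta_v=\pi_w(ba_n)\zeta_w\to\pi_w(b)J^{**}\zeta_v$. Since $\pi_v(b)\zeta_v\in\dom J$, the closedness of $J^{**}$ then forces $J\pi_v(b)\zeta_v=\pi_w(b)J^{**}\zeta_v$; comparing with the defining relation $J\pi_v(b)\zeta_v=\pi_w(b)\zeta_w$ gives $\pi_w(b)(\zeta_w-J^{**}\zeta_v)=0$ for every $b\in\alg$.

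Setting $\eta:=\zeta_w-J^{**}\zeta_v$, the calculation $\sipw{\pi_w(c)\zeta_w}{\eta}=\sipw{\zeta_w}{\pi_w(c^*)\eta}=0$ for every $c\in\alg$ shows $\eta\perp\pi_w\langle\alg\rangle\zeta_w$, and the cyclicity of $\zeta_w$ forces $\eta=0$, so $J^{**}\zeta_v=\zeta_w$ as required. Regarding the listed sufficient conditions: in the unital case $\zeta_v=\pi_v(e)\zeta_v\in\pi_v\langle\alg\rangle\zeta_v\subseteq\dom W$, while the estimate $w(a^*a)\leq C\,v(a^*a)$ makes $J$ bounded (by $\sqrt{C}$) on its dense domain, so $J^{**}$ is defined on all of $\hilv$ and $\zeta_v\in\dom J^{**}=\dom W$ trivially. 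The only delicate point is that in the non-unital case one cannot simply substitute $b=e$ in $\pi_w(b)(\zeta_w-J^{**}\zeta_v)=0$; this is precisely where the cyclicity of $\zeta_w$ is essential to pass from the operator-level identity to the vector identity $\zeta_w=J^{**}\zeta_v$.
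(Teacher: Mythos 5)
Your proof is correct and takes essentially the same route as the paper's: you establish $J^{**}\zeta_v=\zeta_w$ by combining the intertwining relation \eqref{E:J**piv} (which you re-derive at the vector $\zeta_v$ from the defining sequence rather than simply citing it) with the cyclicity of $\zeta_w$, exactly as in the paper, and then deduce \eqref{E:w(a)=} from \eqref{E:J**piv} together with the identity $\sipw{J^{**}\xi}{J^{**}\eta}=\sipv{W\xi}{W\eta}$ on $\dom J^{**}=\dom W$. Your remarks on the unital and dominated cases likewise match the paper's parenthetical justification.
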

\begin{proof}
    Let $a\in\alg$. By \eqref{E:J**piv} we conclude that
    \begin{align*}
        \sipw{J^{**}\zeta_v}{\pi_w(a)\zeta_w}=\sipw{J^{**}\pi_v(a^*)\zeta_v}{\zeta_w}=\sipw{\pi_w(a^*)\zeta_w}{\zeta_w}=\sipw{\zeta_w}{\pi_w(a)\zeta_w},
    \end{align*}
    hence $J^{**}\zeta_v=\zeta_w$, indeed. Consequently,
    \begin{align*}
        w(a)=\sipw{\pi_w(a)\zeta_w}{\zeta_w}=\sipw{J^{**}\pi_v(a)\zeta_w}{J^{**}\zeta_v}=\sipv{W\pi_v(a)\zeta_v}{W\zeta_v}.
    \end{align*}
    If $\alg$ is unital then $\zeta_v=1$, and if $w\leq Cv$ then $W\in\mathscr{B}(\hilv)$. In any case, $\zeta_v\in\dom W$.
\end{proof}
The next result generalizes \cite[Theorem 1 c]{Gudder} of Gudder:
\begin{proposition}\label{P:domination}
    Assume that $v,w$ are representable functionals on $\alg$ such that $w$ is $v$-absolutely continuous. The following statements are equivalent:
    \begin{enumerate}[\upshape (i)]
      \item $w\leq Cv$ for some $C\geq0$;
      \item $W^2\in Com(\pi_v)$ (that is to say, $W^2\in\mathscr{B}(\hilv)$ and $W^2\pi_v(a)=\pi_v(a)W^2$ for all $a\in\alg$).
    \end{enumerate}
\end{proposition}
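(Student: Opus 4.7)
The plan is to exploit the identification $W^2 = J^*J^{**}$ coming from the proof of the preceding theorem, together with the intertwining relations \eqref{E:J**piv} and \eqref{E:J*piv}. The direction (ii)$\Rightarrow$(i) is the soft one, so I would dispatch it first.

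For (ii)$\Rightarrow$(i), I would observe that if $W^2$ is bounded on $\hilv$, then for every $\xi\in\dom W$,
\begin{equation*}
    \|W\xi\|_{v}^2=\sipv{W^2\xi}{\xi}\leq\|W^2\|\,\|\xi\|_{v}^2,
\end{equation*}
so $W$ is bounded on $\dom W$, and since $W$ is selfadjoint (hence closed) it follows that $\dom W=\hilv$ and $\|W\|^2=\|W^2\|$. Using formula \eqref{E:W1} with $b=a$, which is applicable because $\pi_v(a)\zeta_v\in\dom W$, one obtains
\begin{equation*}
    w(a^*a)=\|W\pi_v(a)\zeta_v\|_{v}^2\leq\|W\|^2\,\|\pi_v(a)\zeta_v\|_{v}^2=\|W\|^2\,v(a^*a),
\end{equation*}
so (i) holds with $C=\|W\|^2$.

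For (i)$\Rightarrow$(ii), I would first derive from the inequality $w(a^*a)\leq Cv(a^*a)$ that the canonical map $J\colon\pi_v(a)\zeta_v\mapsto\pi_w(a)\zeta_w$ is bounded with $\|J\|\leq\sqrt{C}$. Hence $J^{**}$ equals the bounded closure $\bar{J}$, is defined on all of $\hilv$, and its adjoint $J^*$ is the Hilbert space adjoint of a bounded operator and therefore also everywhere defined and bounded. Consequently $W^2=J^*J^{**}$ belongs to $\mathscr{B}(\hilv)$. The remaining task is to verify commutativity with $\pi_v(a)$. Since $J^{**}$ is everywhere defined, the inclusion \eqref{E:J**piv} becomes the genuine operator identity $J^{**}\pi_v(a)=\pi_w(a)J^{**}$, and then
\begin{equation*}
    W^2\pi_v(a)=J^*J^{**}\pi_v(a)=J^*\pi_w(a)J^{**}\supseteq\pi_v(a)J^*J^{**}=\pi_v(a)W^2,
\end{equation*}
where the inclusion uses \eqref{E:J*piv}. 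Because both $W^2\pi_v(a)$ and $\pi_v(a)W^2$ are now everywhere defined bounded operators on $\hilv$, the inclusion forces equality, which is the required commutation relation.

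The only delicate point I anticipate is the bookkeeping in turning the inclusions \eqref{E:J**piv} and \eqref{E:J*piv} into genuine equalities; once $J$ is known to be bounded, however, every operator in sight is everywhere defined, and the argument reduces to a line of composition identities. No other obstacles are expected.
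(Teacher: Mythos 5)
Your argument is correct and takes essentially the same route as the paper: identify $W^2=J^*J^{**}$, note that $w\leq Cv$ is equivalent to boundedness of this operator, and upgrade the intertwining inclusions \eqref{E:J**piv} and \eqref{E:J*piv} to genuine equalities to obtain the commutation relation. The only difference is that you spell out the (ii)$\Rightarrow$(i) direction, which the paper dismisses as obvious.
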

\begin{proof}
    Obviously, (i) holds if and only if $W^2=J^{**}J^*$ is continuous. Furthermore, if (i) holds then \eqref{E:J**piv} and \eqref{E:J*piv} become equalities. Hence
    \begin{equation*}
        W^2\pi_v(a)=J^*J^{**}\pi_v(a)=\pi_v(a)J^*J^{**}=\pi_v(a)W^2
    \end{equation*}
    for all $a\in\alg.$
\end{proof}
\begin{corollary}\label{C:irred}
    Assume that $v,w$ are representable functionals on $\alg$ such that $w\leq Cv$ for some $C\geq0.$ If $\pi_v$ is irreducible (that is, $Com(\pi_v)=\dupC.I$) then there exists $\alpha\geq0$ such that $w=\alpha v$.
\end{corollary}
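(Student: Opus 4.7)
The plan is that the corollary is almost immediate from Proposition \ref{P:domination} combined with the irreducibility hypothesis and Corollary \ref{C:domJ}. First, since $w\leq Cv$, Proposition \ref{P:domination} tells us that $W^2\in Com(\pi_v)$. The irreducibility assumption $Com(\pi_v)=\dupC\cdot I$ then forces $W^2=\alpha\cdot I$ for some scalar $\alpha$; positivity of $W^2$ (as $W$ is positive selfadjoint) gives $\alpha\geq 0$.

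Next, I would use Corollary \ref{C:domJ}. The domination $w\leq Cv$ implies that $W=(J^*J^{**})^{1/2}\in\mathscr{B}(\hilv)$ (this is the equivalence already recorded in Proposition \ref{P:domination}), so in particular $\zeta_v\in\dom W$ and the hypothesis of Corollary \ref{C:domJ} is met. Formula \eqref{E:w(a)=}, combined with selfadjointness of $W$ and $W^2=\alpha I$, then yields for every $a\in\alg$
\begin{equation*}
    w(a) = \sipv{W\pi_v(a)\zeta_v}{W\zeta_v} = \sipv{W^2\pi_v(a)\zeta_v}{\zeta_v} = \alpha\sipv{\pi_v(a)\zeta_v}{\zeta_v} = \alpha v(a),
\end{equation*}
which is the desired identity $w=\alpha v$.

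There is no real obstacle here: the statement is essentially a one-line consequence of the two preceding results. The only minor points worth noting are the verification that $\zeta_v\in\dom W$ (which is automatic from boundedness of $W$) and that positivity of $W^2$ forces $\alpha\geq 0$, matching the claim of the corollary.
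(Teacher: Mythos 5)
Your proof is correct and follows essentially the same route as the paper: Proposition \ref{P:domination} plus irreducibility to get $W^2=\alpha I$, then Corollary \ref{C:domJ} (with $\zeta_v\in\dom W$ from boundedness of $W$) to conclude $w=\alpha v$. The extra remarks about $\alpha\geq0$ and $\zeta_v\in\dom W$ are exactly the points the paper treats implicitly, so there is nothing to add.
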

\begin{proof}
    By Proposition \ref{P:domination}, $W^2\in Com(\pi_v)$, hence there exists $\alpha\geq0$ such that $W^2=\alpha I$ according to the irreducibility. Thus, by Corollary \ref{C:domJ},
    \begin{equation*}
        w(a)=\sipv{W\pi_v(a)\zeta_v}{W\zeta_v}=\sipv{\alpha\pi_v(a)\zeta_v}{\zeta_v}=\alpha v(a),
    \end{equation*}
    for all $a\in\alg$.
\end{proof}
We close our paper with characterization of absolute continuity among pure functionals on a $C^*$-algebra, cf. also \cite{Szucs2013}:
\begin{corollary}
    Let $\alg$ be a $C^*$-algebra such that $v,w$ are  positive functionals on $\alg$. If $v$ is pure then the following statements are equivalent:
    \begin{enumerate}[\upshape (i)]
      \item $w$ is $v$-absolutely continuous;
      \item $w=\alpha v$ for some $\alpha\geq0$.
    \end{enumerate}
\end{corollary}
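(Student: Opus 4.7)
The implication (ii)$\Rightarrow$(i) is immediate: if $w=\alpha v$ then $w(a_n^*a_n)=\alpha v(a_n^*a_n)$, so any $(v,w)$-sequence automatically has $w(a_n^*a_n)\to0$. The substance is in (i)$\Rightarrow$(ii). The plan is to exploit the operator $W$ produced by the preceding theorem together with the irreducibility of $\pi_v$; this latter is equivalent to purity of $v$ by a standard result on $C^*$-algebras, and one also uses that positive functionals on a $C^*$-algebra are automatically representable.

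Given (i), the preceding theorem provides the positive selfadjoint $W=(J^*J^{**})^{1/2}$ on $\hilv$ satisfying \eqref{E:W1} and \eqref{E:W2}. My central step is to show that every bounded Borel function of $W^2$ lies in $Com(\pi_v)$. I would read this off from the intertwinings \eqref{E:J**piv} and \eqref{E:J*piv}: if $\xi\in\dom W^2$ then $\pi_v(a)\xi\in\dom J^{**}$ with $J^{**}\pi_v(a)\xi=\pi_w(a)J^{**}\xi$ by \eqref{E:J**piv}, and $\pi_w(a)J^{**}\xi\in\dom J^*$ by \eqref{E:J*piv}. Hence $\pi_v(a)\dom W^2\subseteq\dom W^2$ and $W^2\pi_v(a)=\pi_v(a)W^2$ on $\dom W^2$. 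The standard upgrade from such strong commutation (with $\pi_v(a)$ bounded) to commutation with the bounded Borel functional calculus of a selfadjoint operator then delivers $f(W^2)\in Com(\pi_v)$ for every bounded Borel $f$; one verifies this on resolvents $(W^2-\lambda)^{-1}$, $\lambda\in\dupC\setminus\dupR$, and then extends by the spectral theorem.

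Irreducibility of $\pi_v$ forces $Com(\pi_v)=\dupC\cdot I$, so every spectral projection of $W^2$ is $0$ or $I$; since the spectral family is monotone in $\lambda$, this leaves only one possibility, namely $W^2=\alpha I$ for some $\alpha\geq0$. In particular $W^2$ is bounded, which by Proposition \ref{P:domination} gives $w\leq\alpha v$, and Corollary \ref{C:irred} then concludes $w=\alpha v$. (Equivalently, boundedness of $W$ places $\zeta_v\in\dom W$, so Corollary \ref{C:domJ} yields directly $w(a)=\sipv{W\pi_v(a)\zeta_v}{W\zeta_v}=\alpha v(a)$.)

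The principal obstacle is the passage from the intertwinings \eqref{E:J**piv}--\eqref{E:J*piv} to the strong commutation $W^2\pi_v(a)=\pi_v(a)W^2$ on $\dom W^2$ and then to commutation of $\pi_v(a)$ with the whole spectral family of $W^2$; everything else is a direct invocation of the Corollaries already established. A minor cosmetic point is to verify that the implication $\pi_v(a)\dom W^2\subseteq\dom W^2$ really follows from the given graph inclusions, but this is a one-line bookkeeping check using that $\dom J^*$ is exactly the domain on which $\pi_w(a)$ is post-composable into $J^*$ by \eqref{E:J*piv}.
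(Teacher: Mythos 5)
Your proof is correct, but it reaches (i)$\Rightarrow$(ii) by a genuinely different route than the paper. The paper's proof is geometric: since $\pi_v\langle\alg\rangle\zeta_v$ is a $\pi_v$-invariant linear manifold and $\pi_v$ is irreducible, Kadison's transitivity theorem gives $\pi_v\langle\alg\rangle\zeta_v=\hilv$, hence $\dom W=\hilv$, so the closed graph theorem makes $W$ bounded, $w\leq Cv$ follows, and Corollary \ref{C:irred} finishes. You instead work with the commutant: from \eqref{E:J**piv}--\eqref{E:J*piv} you deduce the unbounded commutation $\pi_v(a)W^2\subseteq W^2\pi_v(a)$ (your domain bookkeeping is right: $\xi\in\dom J^*J^{**}$ gives $\pi_v(a)\xi\in\dom J^{**}$ with $J^{**}\pi_v(a)\xi=\pi_w(a)J^{**}\xi\in\dom J^*$ by \eqref{E:J*piv}, and $J^*\pi_w(a)J^{**}\xi=\pi_v(a)W^2\xi$), upgrade it to commutation of each $\pi_v(a)$ with the resolvents and hence the spectral projections of the selfadjoint operator $W^2$, and let the trivial commutant force $W^2=\alpha I$; then $W$ is bounded and $w=\alpha v$ follows via Proposition \ref{P:domination} and Corollaries \ref{C:irred}/\ref{C:domJ}, as you indicate. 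The trade-off: the paper's argument is shorter but leans on Kadison transitivity, a specifically $C^*$-algebraic tool; yours avoids transitivity at the cost of the standard spectral-theoretic lemma that a bounded $T$ with $TA\subseteq AT$, $A$ selfadjoint, commutes with the bounded Borel calculus of $A$, and as a by-product it shows that the commutation in Proposition \ref{P:domination}(ii) holds in the unbounded sense with no domination hypothesis, so your argument applies to any representable $v$ whose GNS representation has trivial commutant; the $C^*$-assumption enters (in both proofs) only through representability of positive functionals and the fact that purity of $v$ yields $Com(\pi_v)=\dupC\cdot I$. A small simplification available to you: once each resolvent $(W^2-\lambda)^{-1}$, $\lambda\in\dupC\setminus\dupR$, is known to lie in $Com(\pi_v)=\dupC\cdot I$, it is a nonzero multiple of $I$, so $\dom W^2=\ran\,(W^2-\lambda)^{-1}=\hilv$ and $W^2=\alpha I$ at once, with no need to examine the monotone spectral family or check finiteness of its threshold.
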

\begin{proof}
    Recall that any positive functional of a $C^*$-algebra is representable. Since $\pi_v\langle\alg\rangle\zeta_v$ is obviously $\pi_v$-invariant, and since $\pi_v$ irreducible, the Kadison transitivity theorem implies that $\pi_v\langle\alg\rangle\zeta_v=\hilv$. This means among others that $\dom W=\hilv$, and hence that $W\in\mathscr{B}(\hilv)$ by the Banach closed graph theorem. Consequently, $w\leq Cv$ with some $C\geq0$. Corollary \ref{C:irred} completes the proof.
\end{proof}
\bibliographystyle{amsplain}

\end{document}